\theoremstyle{plain}
\numberwithin{equation}{section}
\newtheorem{thm}{Theorem}[section]
\newtheorem{cor}[thm]{Corollary}
\newtheorem{lem}[thm]{Lemma}
\theoremstyle{definition}
\theoremstyle{remark}
\numberwithin{equation}{section}
\begin{document}
\title[Groups with finiteness conditions on the lower central series]{Groups with finiteness conditions on the lower central series of non-normal subgroups}
\author{Fausto De Mari}
\address{Dipartimento di Ingegneria\\
Universit\`{a} degli Studi di Napoli Parthenope\\
Centro Direzionale Isola C4\\
80143 Napoli, Italy}
\email{fausto.demari@uniparthenope.it}

\begin{abstract}
It is known that any locally graded group with finitely many
derived subgroups of non-normal subgroups is finite-by-abelian.
This result is generalized here, by proving that in a locally
graded group $G$ the subgroup $\gamma_{k}(G)$ is finite if the set
$\{\gamma_{k}(H)\;|\;H\ntriangleleft G\}$ is finite. Moreover,
locally graded groups with finitely many $k$th terms of lower
central series of infinite non-normal subgroups are also
completely described.
\end{abstract}
\subjclass[2010]{20F14, 20F19}
\keywords{Lower central series; finite-by-nilpotent groups}
\maketitle


\section{Introduction}

Restrictions on the derived subgroup of a group can be obtained
through various finiteness conditions. For instance, F. de
Giovanni and D.J.S. Robinson \cite{dGR05} proved that if a locally
graded group $G$ has finitely many derived subgroups, then its
derived subgroup $G^{\prime}$ is finite, and the assumption that
the group $G$ is locally graded cannot be omitted as can be seen
from the consideration of Tarski groups (i.e. infinite simple
groups in which any proper non-trivial subgroup has prime order).
Recall that a group $G$ is said to be \textit{locally graded} if
each finitely generated non-trivial subgroup of $G$ contains a
proper subgroup of finite index; of course, all locally
(soluble-by-finite) groups are locally graded. In \cite{dGR05},
the authors also proved that a locally graded group has finitely
many derived subgroups of infinite subgroups if and only if it is
either finite-by-abelian or an irreducible \v{C}ernikov group.
Here a  \v{C}ernikov group is said to be \textit{irreducible} if
its largest divisible abelian subgroup $D$ is not central and $D$
does not contain infinite proper $K$-invariant subgroups for each
subgroup $K$ of $G$ such that $C_{G}(D)<K$. In \cite{DMdG06}, F.
De Mari and F. de Giovanni proved that a locally graded group is
finite-by-abelian provided it has finitely many derived subgroups
of non-normal subgroups and that a locally graded group having
finitely many derived subgroups of infinite non-normal subgroups
is either a finite-by-abelian group or an irreducible \v{C}ernikov
group. Recently, S. Rinauro \cite{R13} proved that if $G$ is a
locally graded group and for some integer $k\geq 2$ the set
$\Gamma_{k}(G)=\{\gamma_{k}(H)~|~H\leq G\}$ is finite, then the
subgroup $\gamma_{k}(G)$ is finite;
moreover, if $G$ is a locally graded group and the set $\Gamma_{k}^{\infty}%
(G)=\{\gamma_{k}(H)~|~H\leq G,~H$ infinite$\}$ is finite, then
either $\gamma_{k}(G)$ is finite or $G$ is an irreducible
\v{C}ernikov group.

The results quoted here suggest to consider
 groups $G$ for which the set
\[
\bar{\Gamma}_{k}(G)=\{\gamma_{k}(H)~|~H\ntriangleleft G\}
\]
is finite, for a given positive integer
$k\geq2$. We will refer to such a group $G$ as a $\bar{\Gamma}_{k}%
-$\textit{group} and we will prove the following result.

\bigskip\noindent\textbf{Theorem A.} \textit{Let $G$ be a locally graded
$\bar{\Gamma}_{k}-$group. Then $\gamma_{k}(G)$ is finite. }

\bigskip A group $G$ will be called a
$\bar{\Gamma}_{k}^{\infty}%
-$\textit{group} if the set
\[
\bar{\Gamma}_{k}^{\infty}(G)=\{\gamma_{k}(H)~|~H\ntriangleleft
G,H\text{~infinite}\}
\]
is finite (here again $k$ is a positive integer such that
$k\geq2$). For locally graded $\bar{\Gamma}_{k}^{\infty}-$groups
the following result will be obtained.

\bigskip\noindent\textbf{Theorem B.} \textit{Let $G$ be a locally graded
$\bar{\Gamma}_{k}^{\infty}-$group. Then either $G$ is an
irreducible \v{C}ernikov group or $\gamma_{k}(G)$ is finite. }

\bigskip
A group is \textit{metahamiltonian} if all its non-normal
subgroups are abelian. Such groups are involved in the
consideration of groups in which the set of derived subgroups of
non-normal subgroups is finite, since they are precisely the
groups for which such a set does not contain non-trivial
subgroups. In our consideration of $\bar{\Gamma}_{k}-$groups (or
$\bar{\Gamma}_{k}^{\infty}-$groups) information is needed about
groups in which the non-normal (infinite) subgroups are nilpotent
of class at most $k-1$. The behaviour of such groups will be
investigated in section 2 of this paper, while section 3 will be
devoted to the proof of our main theorems.

Most of our notation is standard and can be found in \cite{R72}.

\bigskip

\section{Groups whose non-normal subgroups are nilpotent}

\medskip In this section groups in which each subgroup (respectively,
infinite subgroup) is either normal or nilpotent of class at most
$c$ (where $c$ is a fixed positive integer) are considered, such a
class of groups coincides with that of all groups $G$ for which
the set $\bar{\Gamma}_{c-1}(G)$ (respectively,
$\bar{\Gamma}_{c-1}^{\infty}(G)$) does not contain non-trivial
subgroups. The behaviour of groups with this property represents
the first step in the study of groups satisfying the property
$\bar{\Gamma}_{k}$ or $\bar{\Gamma}_{k}^{\infty}$, and it can also
be seen in relation with \cite{BP81} where (generalized) soluble
groups were considered that are not locally nilpotent but in which
all non-normal subgroups are locally nilpotent.

\medskip

\begin{lem}
\label{SF}Let $G$ be a locally graded group whose infinite
non-normal subgroups are nilpotent. Then $G$ is soluble-by-finite
and locally satisfies the maximal condition.
\end{lem}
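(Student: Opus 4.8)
The plan is to strip off the easy configurations, distil a ``Dedekind quotient'' property from the hypothesis, reduce the whole statement to the assertion that $G$ is soluble-by-finite, and then deduce the maximal condition from that via the structure of finitely generated soluble-by-finite groups.

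\emph{Reductions.} We may assume $G$ is infinite. If every subgroup of $G$ is normal, then $G$ is a Dedekind group, hence nilpotent of class at most $2$: it is soluble and its finitely generated subgroups are polycyclic, so both assertions hold. If every infinite subgroup of $G$ is normal while $G$ is not Dedekind, then $G$ is a \v{C}ernikov group --- this is the known description of such groups, and it can also be checked directly --- hence abelian-by-finite with all finitely generated subgroups finite, and again we are done. So from now on $G$ possesses an infinite non-normal, hence nilpotent, subgroup; note that the hypothesis is inherited by every subgroup of $G$, since a subgroup that fails to be normal in some $H\le G$ fails to be normal in $G$ as well.

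\emph{The key observation.} Let $M$ be an infinite subgroup of $G$ which is not nilpotent. Then $M$ is normal in $G$ (otherwise $M$ would be nilpotent), and for every $g\in G$ the subgroup $\langle M,g\rangle\supseteq M$ is again infinite and non-nilpotent, hence normal in $G$; therefore every cyclic subgroup of $G/M$ is normal, so $G/M$ is a Dedekind group, in particular metabelian, whence $G^{(2)}\le M$. This has two uses. First, it reduces the lemma to proving that $G$ is soluble-by-finite: solubility-by-finiteness passes to subgroups, so then every finitely generated subgroup $F$ of $G$ is finitely generated, soluble-by-finite and still has the hypothesis, and such an $F$ must be polycyclic-by-finite --- for otherwise, by the structure theory of finitely generated soluble-by-finite groups, $F$ would have a section isomorphic to $C_p\wr\langle t\rangle$ or to $\mathbb{Z}[1/p]\rtimes\langle t\rangle$ (with $\langle t\rangle$ infinite cyclic), each of which contains an infinite non-normal non-nilpotent subgroup, whose preimage in $F$ would be an infinite non-normal non-nilpotent subgroup, against the hypothesis --- and hence $G$ locally satisfies the maximal condition. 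Second, if $G$ is \emph{not} soluble then, examining the derived series, either some $G^{(n)}$ $(n\ge1)$ is a finite non-soluble subgroup --- in which case $G$ is finite-by-soluble, hence soluble-by-finite (a finite-by-soluble group is soluble-by-finite), and we are done --- or every $G^{(n)}$ $(n\ge1)$ is infinite and non-nilpotent, and then $G^{(2)}\le G^{(3)}\le G^{(2)}$ (the first inclusion by the observation with $M=G^{(3)}$) forces $G^{(2)}=G^{(3)}=\cdots$ to be an infinite perfect normal subgroup $P$, inside which, again by the observation, every proper subgroup is finite or nilpotent.

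\emph{The crux and the main obstacle.} It remains to exclude this last configuration, and this is where I expect the real work to lie: an infinite perfect normal subgroup $P$ of a locally graded group, all of whose proper subgroups are finite or nilpotent, cannot exist. The obstacle is that the hypothesis says nothing whatsoever about the finite non-normal subgroups of $G$, so solubility-by-finiteness cannot be read off arithmetically; one has to exploit local gradedness (which rules out Tarski-type behaviour) to show that a proper subgroup of $P$ which is ``almost everything'' must be finite, or else combines with the ambient structure into a forbidden infinite non-normal non-nilpotent subgroup. For finitely generated groups this is carried out using the proper normal subgroups of finite index supplied by local gradedness together with the ``minimal non-nilpotent'' structure of $P$; for the general case one adds a direct-limit argument and the analysis in \cite{BP81} of (generalized) soluble groups whose non-normal subgroups are locally nilpotent. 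Once $P$ is ruled out, $G$ is soluble-by-finite, and then, by the first use of the key observation, $G$ also locally satisfies the maximal condition.
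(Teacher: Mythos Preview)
Your key observation is exactly the paper's: if $M\le G$ is infinite and non-nilpotent then $M\trianglelefteq G$ and $G/M$ is Dedekind, so $\gamma_3(G)\le M$ (you state the weaker $G^{(2)}\le M$, which is fine). Hence every proper subgroup of $\gamma_3(G)$ is finite or nilpotent. Up to here the two arguments agree.

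The gap is precisely the part you flag as ``the crux and the main obstacle'': you do not actually prove that this configuration is impossible, you only describe a strategy (local gradedness, minimal-non-nilpotent structure, a direct-limit argument, an appeal to \cite{BP81}). None of these steps is carried out, and the appeal to \cite{BP81} is misplaced: that paper assumes \emph{all} non-normal subgroups are locally nilpotent, not merely the infinite ones, so it does not apply to your subgroup $P$ without further work. The paper closes this gap in one line, and with a different tool from the one you reach for: since every proper subgroup of $\gamma_3(G)$ is finite or nilpotent, $\gamma_3(G)$ trivially satisfies the minimal condition on non-nilpotent subgroups, and the theorem of Dixon--Evans--Smith \cite{DES01} then gives that $\gamma_3(G)$ is soluble-by-finite. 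Since $G/\gamma_3(G)$ is nilpotent of class at most $2$, $G$ itself is soluble-by-finite. Note that the detour through a perfect subgroup $P=G^{(2)}=G^{(3)}$ is unnecessary: one does not need to derive a contradiction, only to conclude solubility-by-finiteness of $\gamma_3(G)$.

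For the second assertion, your section argument (finding a $C_p\wr C_\infty$-type section inside a finitely generated soluble-by-finite non-polycyclic group and pulling back an infinite non-normal non-nilpotent subgroup) is plausible but you have not justified the existence of such a section, and the lifting of ``non-normal in the section'' to ``non-normal in $G$'' needs care. The paper avoids all of this by a direct citation: once $G$ is soluble-by-finite, Theorem~A of Phillips--Wilson \cite{PW78} yields that $G$ locally satisfies the maximal condition. If you want a self-contained argument, it would be cleaner to supply that reference than to improvise the section analysis.
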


\begin{proof}
Let $H$ be any infinite subgroup of $\gamma_{3}(G)$ and assume
that $H$ is not nilpotent. If $K$ is any subgroup of $G$
containing $H$, then $K$ is infinite and non-nilpotent and so it
is a normal subgroup of $G$. Therefore $G/H$ is a Dedekind group
and hence $\gamma_{3}(G)$ is contained in $H$. This proves that
any proper subgroup of $\gamma_{3}(G)$ is either finite or
nilpotent; in particular, $\gamma_{3}(G)$ satisfies the minimal
condition on non-nilpotent subgroups and thus it is
soluble-by-finite (see \cite{DES01}). It follows that $G$ is
soluble-by-finite and so it locally satisfies the maximal
condition (see \cite{PW78}, Theorem A).
\end{proof}

\medskip In our argument we need the following elementary lemma, which is
probably already well-known.

\medskip

\begin{lem}
\label{lntf}Let $G$ be a locally nilpotent torsion-free group and
let $H$ be a subgroup of $G$ which is nilpotent of class at most
$c$. If $H$ has finite index in $G$, then also $G$ is nilpotent of
class at most $c.$
\end{lem}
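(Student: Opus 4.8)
The plan is to reduce to the finitely generated case and then argue by induction on the nilpotency class. Since $\gamma_{c+1}(G)$ is generated by the commutators $[g_1,g_2,\dots,g_{c+1}]$ with $g_i\in G$, and each such commutator lies in $\gamma_{c+1}(F)$ for the finitely generated subgroup $F=\langle g_1,\dots,g_{c+1}\rangle$, it is enough to show that every finitely generated subgroup $F$ of $G$ is nilpotent of class at most $c$. Such an $F$ is finitely generated and nilpotent (as $G$ is locally nilpotent) and torsion-free, while $F\cap H$ has finite index in $F$ and is nilpotent of class at most $c$, being a subgroup of $H$. Replacing $F\cap H$ by its core $N$ in $F$, which is a normal subgroup of $F$ of finite index still contained in $H$, we are reduced to the following situation: $K$ is a finitely generated torsion-free nilpotent group, $L$ is a normal subgroup of finite index $n=|K:L|$ that is nilpotent of class at most $c$, and we must prove that $K$ is nilpotent of class at most $c$.

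I would prove the sharper statement that $L$ and $K$ have the same nilpotency class, by induction on the class $d$ of $K$ (the cases $d\le 1$ being immediate, since a non-trivial torsion-free group has no finite proper-index trivial subgroup). The inequality $\mathrm{class}(L)\le\mathrm{class}(K)$ is clear, so the point is $\gamma_d(L)\ne 1$. Passing to $\bar K=K/Z(K)$ — which is again finitely generated and torsion-free, because the upper central factors of a torsion-free nilpotent group are torsion-free (see \cite{R72}), and which has class $d-1$ — the inductive hypothesis applied to $\bar K$ and $\bar L=LZ(K)/Z(K)\cong L/(L\cap Z(K))$ yields $\gamma_{d-1}(\bar L)\ne 1$, that is, $\gamma_{d-1}(L)\not\le Z(K)$. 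Pick $w\in\gamma_{d-1}(L)$ with $w\notin Z(K)$ and $k\in K$ with $[w,k]\ne 1$. As $K$ has class $d$ we have $\gamma_d(K)\le Z(K)$, so the map $x\mapsto[w,x]$ is a homomorphism from $K$ into $Z(K)$; hence $[w,k^n]=[w,k]^n$, and this is non-trivial because $Z(K)$ is torsion-free. But $k^n\in L$ and $w\in\gamma_{d-1}(L)$, so $[w,k^n]\in\gamma_d(L)$, and therefore $\gamma_d(L)\ne 1$. Thus $\mathrm{class}(L)=d=\mathrm{class}(K)$; since $\mathrm{class}(L)\le c$, we get $\mathrm{class}(K)\le c$, and running the reductions backwards gives $\gamma_{c+1}(F)=1$ for every finitely generated $F\le G$, whence $\gamma_{c+1}(G)=1$.

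The main difficulty here is not a single hard step but keeping the bookkeeping honest: one must check that the finite-index hypothesis is preserved on intersecting with a finitely generated subgroup and on passing to a core, and that the two invocations of torsion-freeness — of $Z(K)$ and of $K/Z(K)$ — are legitimately available for finitely generated torsion-free nilpotent groups, both of which are classical. An alternative and perhaps slicker route avoids the induction: one embeds the finitely generated torsion-free nilpotent group $K$ in its Mal'cev completion $\hat K$, a radicable torsion-free nilpotent group of the same nilpotency class as $K$; since every element of $K$ has a power in the finite-index normal subgroup $L$, the isolator of $L$ in $\hat K$ contains $K$ and so equals $\hat K$, which makes $\hat K$ simultaneously the Mal'cev completion of $L$, and one then reads off $\mathrm{class}(K)=\mathrm{class}(\hat K)=\mathrm{class}(L)\le c$.
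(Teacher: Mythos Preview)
Your proof is correct, but it takes a genuinely different route from the paper's. The paper does not reduce to the finitely generated case at all: it quotes the fact (Lennox--Robinson, 2.3.9) that for a finite-index subgroup $H$ of a torsion-free locally nilpotent group $G$ one has $Z_c(H)=H\cap Z_c(G)$; since $H=Z_c(H)$, this gives $H\le Z_c(G)$, so $G/Z_c(G)$ is finite, whence $\gamma_{c+1}(G)$ is finite by the Baer--Hall theorem, and therefore trivial by torsion-freeness. Your argument, by contrast, localises to a finitely generated subgroup, passes to a normal core, and then proves directly (by induction on the class, using that $K/Z(K)$ is again torsion-free) the sharper statement that a finite-index normal subgroup of a finitely generated torsion-free nilpotent group has the \emph{same} class; the Mal'cev-completion variant you sketch gives the same conclusion more structurally. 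What the paper's approach buys is brevity, at the cost of invoking a fairly specific cited result about upper central series; what your approach buys is self-containment---you use only the classical fact about torsion-freeness of upper central factors---and a slightly stronger intermediate conclusion, at the cost of a longer argument with more bookkeeping.
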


\begin{proof}
Since the index $|G:H|$ is finite and $G$ is torsion-free, it
follows that $Z_{c}(H)=H\cap Z_{c}(G)$ (see \cite{LR04}, 2.3.9).
Therefore $H=Z_{c}(H)$ is contained in $Z_{c}(G)$ and hence
$G/Z_{c}(G)$ is finite. Thus $\gamma _{c+1}(G)$ is finite (see
\cite{R72} Part 1, p.113) and so even $\gamma _{c+1}(G)=\{1\}$.
\end{proof}

\medskip

\begin{lem}
\label{lemma torsionfree}Let $G$ be a locally nilpotent
torsion-free group whose non-normal subgroups are nilpotent of
class at most $c$. Then $G$ is nilpotent of class at most $c$.
\end{lem}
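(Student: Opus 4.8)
The plan is to reduce everything to finitely generated subgroups, and there to exploit the fact that a finitely generated torsion-free nilpotent non-abelian group always has a non-normal subgroup of finite index, which, combined with Lemma~\ref{lntf}, pushes the nilpotency class back down to $c$.

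First I would note that it is enough to show that every finitely generated subgroup $F$ of $G$ is nilpotent of class at most $c$: indeed, every element of $\gamma_{c+1}(G)$ is a product of (conjugates of) left-normed commutators of weight $c+1$ and hence lies in $\gamma_{c+1}(F)$ for a suitable finitely generated $F\le G$, so $\gamma_{c+1}(G)=1$ as soon as all such $\gamma_{c+1}(F)$ vanish. Fix then a finitely generated subgroup $F$ of $G$. Being finitely generated inside a locally nilpotent group, $F$ is nilpotent, and it is torsion-free. Moreover, any subgroup of $F$ that is non-normal in $F$ is also non-normal in $G$ (a subgroup normal in $G$ is a fortiori normal in $F$), hence it is nilpotent of class at most $c$ by hypothesis. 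If $F$ is a Dedekind group, then it is a torsion-free Dedekind group and so abelian, whence its class is at most $1\le c$; so from now on assume $F$ is not Dedekind, in particular $F$ is non-abelian, so $\gamma_2(F)\neq 1$.

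The key step is to produce a non-normal subgroup of $F$ of finite index. Suppose, for a contradiction, that every subgroup of finite index of $F$ is normal in $F$. Then every finite quotient $F/N$ of $F$ is a Dedekind group, because a subgroup of $F/N$ has the form $H/N$ with $[F:H]<\infty$, hence with $H\trianglelefteq F$. On the other hand, a finitely generated torsion-free nilpotent group is residually a finite $p$-group for every prime $p$ (see \cite{LR04}); choosing $1\neq z\in\gamma_2(F)$ and a normal subgroup $N$ of $F$ of $3$-power index with $z\notin N$, the quotient $F/N$ is a non-abelian finite $3$-group, which cannot be a Dedekind group (a finite Dedekind group of odd order is abelian). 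This contradiction shows that $F$ has a subgroup $H$ of finite index with $H\ntrianglelefteq F$. Then $H$ is nilpotent of class at most $c$ by the remark above, and Lemma~\ref{lntf}, applied to the finite-index subgroup $H$ of the torsion-free locally nilpotent group $F$, yields that $F$ itself is nilpotent of class at most $c$.

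Combining the two cases, every finitely generated subgroup of $G$ is nilpotent of class at most $c$, and therefore $\gamma_{c+1}(G)=1$, i.e. $G$ is nilpotent of class at most $c$. I expect the main obstacle to be precisely the construction of a non-normal subgroup of finite index in $F$: a direct combinatorial construction (choosing a Mal'cev basis and perturbing it) is possible but messy, and the residual-$p$ argument above is the clean way around it, provided one is careful that the finitely generated subgroups in play are genuinely torsion-free nilpotent so that residual finiteness with respect to the primes $p$ is available.
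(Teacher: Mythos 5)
Your proof is correct, and it shares the paper's overall skeleton (reduce to a finitely generated subgroup, produce a subgroup of finite index which is nilpotent of class at most $c$, and conclude with Lemma \ref{lntf}), but the way you obtain that finite-index subgroup is genuinely different. The paper, for $c\geq 2$, assumes that no finite-index subgroup has class at most $c$: then every subgroup containing a given finite-index subgroup $H$ is normal, so $G/H$ is a Dedekind group and $\gamma_{3}(G)\leq H$, and ordinary residual finiteness of finitely generated nilpotent groups forces $\gamma_{3}(G)=\{1\}$, a contradiction; the case $c=1$ must then be treated separately by quoting Theorem 3.4 of \cite{DMdG05}. You instead prove the unconditional structural fact that a non-Dedekind finitely generated torsion-free nilpotent group has a non-normal subgroup of finite index, using Gruenberg's theorem that such groups are residually finite $p$-groups for every prime $p$ (taking $p=3$) together with the classification of Dedekind groups (a non-abelian Dedekind group contains $Q_{8}$, so a finite Dedekind $3$-group is abelian); the hypothesis then makes that non-normal subgroup nilpotent of class at most $c$, and Lemma \ref{lntf} applies. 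Your route buys a uniform treatment of $c=1$, avoiding the appeal to \cite{DMdG05}, and isolates a clean auxiliary fact, at the price of invoking the stronger residual-$p$ property instead of mere residual finiteness. The individual steps all check out: non-normality in $F$ implies non-normality in $G$, torsion-free Dedekind groups are abelian, and the image of your element $z\in\gamma_{2}(F)$ makes the finite $3$-group quotient non-abelian, so the contradiction is genuine.
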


\begin{proof}
Clearly we may suppose that $G$ is finitely generated. If $c=1$,
then all non-normal subgroups of $G$ are abelian and hence $G$ is
likewise abelian (see \cite{DMdG05}, Theorem 3.4). Let now
$c\geq2.$ Denote by $\mathcal{L}$ the set of all subgroups of
finite index of $G$ and assume first that no subgroup of $G$ which
belongs to $\mathcal{L}$ is nilpotent of class at most $c$. Then,
if $H\in\mathcal{L}$, any subgroup containing $H$ is normal in $G$
and so the factor $G/H$ is a Dedekind group; in particular, $H$
contains $\gamma_{3}(G).$ Since any finitely generated nilpotent
group is residually
finite, it follows that%
\[
\gamma_{3}(G)\leq\bigcap_{H\in\mathcal{L}}H=\{1\}
\]
and hence $G$ is nilpotent of class $2\leq c.$ This contradiction
proves that there is a subgroup in $\mathcal{L}$ which is
nilpotent of class at most $c$, and so $G$ is likewise nilpotent
of class at most $c$ by Lemma \ref{lntf}.
\end{proof}

\medskip We are now in position to prove that locally graded groups whose
non-normal subgroups are nilpotent of bounded class are
finite-by-nilpotent.

\medskip

\begin{thm}
\label{teorema3}Let $G$ be a locally graded group whose non-normal
subgroups are nilpotent of class at most $c$. Then
$\gamma_{c+1}(G)$ is finite.
\end{thm}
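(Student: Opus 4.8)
The plan is to begin with the case in which $G$ is a Dedekind group. Then $G$ is abelian or Hamiltonian, so $\gamma_{3}(G)=\{1\}$ and $\gamma_{2}(G)$ has order at most $2$; since $c\ge 1$ this already gives that $\gamma_{c+1}(G)$ is finite. Hence one may assume that $G$ has a non-normal subgroup. In particular every infinite non-normal subgroup of $G$ is nilpotent, so Lemma~\ref{SF} applies and $G$ is soluble-by-finite and locally satisfies the maximal condition.

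Next I would reduce to finitely generated subgroups. If $F$ is a finitely generated subgroup of $G$, then any subgroup of $F$ which is not normal in $F$ is not normal in $G$, hence is nilpotent of class at most $c$; moreover $F$ is finitely generated, soluble-by-finite and satisfies the maximal condition, so it is polycyclic-by-finite, in particular residually finite. Since $\gamma_{c+1}(G)$ is the directed union of the subgroups $\gamma_{c+1}(F)$, with $F$ ranging over the finitely generated subgroups of $G$, it is enough to show that each such $\gamma_{c+1}(F)$ is finite of order bounded in terms of $c$ alone. For a polycyclic-by-finite $F$ in the class, Lemmas~\ref{lntf} and~\ref{lemma torsionfree} are used to handle its torsion-free nilpotent normal sections: these inherit the hypothesis and hence are forced to be nilpotent of class at most $c$, so they do not contribute to $\gamma_{c+1}$; combining this with the residual finiteness of $F$, the problem is reduced to bounding the order of the $(c+1)$st lower central term of the \emph{finite} groups whose non-normal subgroups are nilpotent of class at most $c$.

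I expect this last point — the analysis of finite groups in the class (and, via primary decomposition, of torsion locally finite ones), together with the extraction of a uniform bound — to be the real obstacle. The essential mechanism is the rigidity of the hypothesis: every subgroup that is not nilpotent of class at most $c$ must be normal, which tightly restricts the conjugation action of the group on its nilpotent normal subgroups; coupled with an induction on derived length supplied by the soluble-by-finite structure, this excludes configurations (such as infinite-dihedral-like sections, or infinite sections acted on non-trivially) that would force $\gamma_{c+1}$ to be large, and yields the desired bound on $|\gamma_{c+1}(F)|$. Once this bound is available for every finitely generated $F$, the directed-union argument shows that $\gamma_{c+1}(G)$ itself is finite; keeping the bound uniform and accommodating sections that are not residually finite requires some care, but since the bound is obtained already at the finitely generated level this does not present a substantial difficulty.
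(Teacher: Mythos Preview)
Your proposal has a genuine gap at its central step. You claim it suffices to show that for every finitely generated subgroup $F$ of $G$ the order of $\gamma_{c+1}(F)$ is bounded \emph{in terms of $c$ alone}, and you then plan to conclude by a directed-union argument. But no such uniform bound exists: for $c=1$ the dihedral groups $D_{2p}$ (with $p$ an odd prime) are metahamiltonian---every non-normal subgroup has order $2$ and is abelian---yet $\gamma_{2}(D_{2p})$ has order $p$. Thus $|\gamma_{c+1}(F)|$ cannot be bounded by a function of $c$ only, and your directed-union argument collapses: a directed union of finite groups of unbounded orders can very well be infinite. Your own sketch acknowledges that obtaining such a bound for finite groups in the class is ``the real obstacle'', and in fact this obstacle is insurmountable as stated. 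Any salvage would require a bound depending on $G$, which is precisely what one is trying to prove.

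The paper's argument avoids this trap entirely and is both shorter and sharper. After invoking Lemma~\ref{SF} for the soluble-by-finite conclusion, it appeals to Theorem~B of Bruno--Phillips to reduce to the locally nilpotent case. Then, assuming $G$ is not nilpotent of class at most $c$, one picks a finitely generated subgroup $E$ with $\gamma_{c+1}(E)\neq\{1\}$; every subgroup containing $E$ fails to be nilpotent of class $\le c$ and is therefore normal, so $G/E$ is Dedekind and $G'$ is a finitely generated nilpotent group. On the other hand, Lemma~\ref{lemma torsionfree} forces $G/T$ (with $T$ the torsion subgroup) to be nilpotent of class at most $c$, so $\gamma_{c+1}(G)$ is periodic. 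A periodic subgroup of a finitely generated nilpotent group is finite, and the result follows. The decisive idea you are missing is the use of a single ``bad'' finitely generated subgroup $E$ to force $G/E$ Dedekind and hence $G'$ polycyclic; this replaces your attempted uniform bound over all finitely generated subgroups.
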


\begin{proof}
The group $G$ is soluble-by-finite by Lemma \ref{SF} and so
Theorem B of \cite{BP81} allows us to suppose that $G$ is locally
nilpotent. Clearly we may also suppose that $G$ is not nilpotent
of class $c$, so that there exists a finitely generated subgroup
$E$ of $G$ which is not nilpotent of class $c$. Then $E$ is a
normal subgroup of $G$ and the factor $G/E$ is a Dedekind group;
thus $G^{\prime}$ is a finitely generated nilpotent group. On the
other hand, if $T$ is the subgroup consisting of all elements of
finite order of $G$, then $G/T$ is nilpotent of class at most $c$
by Lemma \ref{lemma torsionfree}, so that $\gamma_{c+1}(G)$ is
periodic and so even finite.
\end{proof}

\medskip The next lemma follows easily from a result of D.I. Zai\v{c}ev
\cite{Z74}, we give its proof here for the convenience of the
reader (see also \cite{DMdG07}, Lemma 2.8).

\medskip

\begin{lem}
\label{lemmino} Let $G$ be a periodic soluble-by-finite group and
let $H$ be a finite subgroup of $G$. If $G$ is not a \v{C}ernikov
group, there exists a collection $(K_{i})_{i\in I}$ of infinite
subgroups of $G$ such that $\displaystyle\bigcap_{i\in I}K_{i}=H$.
\end{lem}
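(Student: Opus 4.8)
The plan is to extract the family $(K_{i})$ directly from an $H$-invariant direct decomposition supplied by Zai\v{c}ev's theorem. Since $G$ is periodic, soluble-by-finite and not a \v{C}ernikov group, it does not satisfy the minimal condition, so the result of \cite{Z74}, applied with $H$ (acting by conjugation) as operator group, provides an $H$-invariant subgroup $D=\prod_{n\in\mathbb{N}}D_{n}$ of $G$ which is the direct product of infinitely many non-trivial subgroups $D_{n}$, each of which is itself $H$-invariant.

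The next step is to arrange that $H\cap D=1$. As $H$ is finite, $H\cap D$ is a finite subgroup of the (restricted) direct product $D$, hence is contained in $\prod_{n\in F}D_{n}$ for some finite subset $F$ of $\mathbb{N}$ (take $F$ to be the union of the supports of the finitely many elements of $H\cap D$). Replacing $D$ by the subgroup $\prod_{n\notin F}D_{n}$, which is still infinite, still the direct product of infinitely many non-trivial $H$-invariant subgroups, and still $H$-invariant, I may assume $H\cap D=1$. Since $H$ normalizes $D$, it follows that $HD=H\ltimes D$, so every element of $HD$ has a unique expression $hd$ with $h\in H$ and $d\in D$.

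Now for each $j\in\mathbb{N}$ I would set $L_{j}=\prod_{n\neq j}D_{n}$, an infinite $H$-invariant subgroup of $D$, and put $K_{j}=HL_{j}=H\ltimes L_{j}$. Each $K_{j}$ is then a subgroup of $G$ containing $H$, and it is infinite because $L_{j}$ is. To see that $\bigcap_{j\in\mathbb{N}}K_{j}=H$, let $x$ lie in every $K_{j}$; writing $x=h_{j}l_{j}$ with $h_{j}\in H$ and $l_{j}\in L_{j}\leq D$, uniqueness of the decomposition in $H\ltimes D$ forces all the $h_{j}$ to be equal, say to $h$, and all the $l_{j}$ to be equal to a single element $l\in\bigcap_{j}L_{j}$. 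But $\bigcap_{j\in\mathbb{N}}L_{j}=\bigcap_{j\in\mathbb{N}}\prod_{n\neq j}D_{n}=1$, whence $x=h\in H$; the reverse inclusion is clear since $H\leq K_{j}$ for all $j$. Taking $I=\mathbb{N}$ then gives the required collection.

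The one point that really has to be secured is that Zai\v{c}ev's theorem is used in a form yielding a decomposition whose factors are $H$-invariant, not merely permuted among themselves by $H$: this is exactly what makes each $HL_{j}$ a subgroup and legitimizes the truncation in the second step. If the theorem were quoted only as producing an $H$-invariant $D=\prod_{n}D_{n}$ with $H$ permuting the factors, I would first group the $D_{n}$ into their $H$-orbits and replace $D$ by the direct product of the orbit products (the orbits are finite, so infinitely many of them remain), thus reducing to the invariant-factor case; a genuinely operator-free version would instead require first building a suitable $H$-invariant non-\v{C}ernikov subgroup of $G$ inside which to run the argument, and that is where any real difficulty would lie. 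Granting the equivariant form, the proof is the short computation above.
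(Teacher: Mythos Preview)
Your argument is correct in outline and takes a route genuinely different from the paper's, but the invocation of Zai\v{c}ev needs adjusting. The form of \cite{Z74} used in the paper only yields an $H$-invariant \emph{abelian} subgroup $A$ of $G$ that fails the minimal condition, not a ready-made direct product with $H$-invariant factors. From $A$ one passes to its socle $S$, an infinite elementary abelian $H$-invariant subgroup; the $H$-invariant direct decomposition you want can then be built inductively: pick $a_{1}\in S$, set $D_{1}=\langle a_{1}\rangle^{H}$ (finite since $H$ is finite), complement this finite subgroup inside $S$ and repeat---essentially the argument the paper gives later as Lemma~\ref{dir}. Note that your ``group the $D_{n}$ into $H$-orbits'' patch does not literally apply, since $H$ acts linearly on $S$ and need not permute the cyclic factors of any fixed decomposition; the inductive construction just described is the correct replacement and costs no more effort.

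Once that decomposition is in hand, your cofinite-subproduct construction $L_{j}=\prod_{n\neq j}D_{n}$, $K_{j}=HL_{j}$ works exactly as you say. The paper sidesteps the decomposition entirely: having formed $HS$, it observes that $HS$ is residually finite (a finite extension of an elementary abelian group), chooses a finite-index normal subgroup $N\trianglelefteq HS$ with $H\cap N=\{1\}$, and then a family $(L_{i})_{i\in I}$ of finite-index normal subgroups of $HS$ contained in $N$ with trivial intersection; the subgroups $K_{i}=HL_{i}$ are then infinite and intersect in $H$. Your approach is more explicit and reuses machinery needed elsewhere in the paper; the paper's residual-finiteness argument is a line shorter and avoids building the direct product at all.
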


\begin{proof}
Since $H$ is finite, the group $G$ contains an abelian subgroup
$A$ such that $A^{H}=A$ and $A$ does not satisfy the minimal
condition on subgroups (see \cite{Z74}). Then the socle $S$ of $A$
is infinite and clearly the subgroup $HS$ is residually finite, so
that there exists a normal subgroup of finite index $N$ of $HS$
such that $H\cap N=\{1\}$, and a collection $(L_{i})_{i\in I}$ of
normal subgroups of finite index of $HS$ such that each $L_{i}$ is
contained in $N$ and the intersection
 $\displaystyle\bigcap_{i\in I}L_{i}$ is trivial. Therefore each
$HL_{i}$ is infinite and $\displaystyle\bigcap_{i\in I}HL_{i}=H.$
\end{proof}

\medskip

\begin{thm}
\label{th3bis}Let $G$ be a locally graded group whose infinite
non-normal subgroups are nilpotent (of class at most $c$). Then
either $G$ is a \v{C}ernikov group or all non-normal subgroups of
$G$ are nilpotent (of class at most $c$).
\end{thm}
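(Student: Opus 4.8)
The plan is to show that if $G$ is not a \v{C}ernikov group, then every non-normal subgroup of $G$ is nilpotent of class at most $c$ (the version without the bound on the class being proved by the same argument). By Lemma~\ref{SF} the group $G$ is soluble-by-finite and locally satisfies the maximal condition, so that every finitely generated subgroup of $G$ is polycyclic-by-finite. Since the infinite non-normal subgroups of $G$ are nilpotent of class at most $c$ by hypothesis, it is enough to prove that each \emph{finite} non-normal subgroup $H$ of $G$ is contained in some infinite non-normal subgroup $K$: for then $K$ is nilpotent of class at most $c$, and hence so is $H$. The device used throughout is the elementary remark that if a finite non-normal subgroup $H$ can be written as an intersection $H=\bigcap_{i\in I}K_{i}$ of infinite subgroups $K_{i}$, then the $K_{i}$ cannot all be normal in $G$ (otherwise $H$ would be normal), so one of them is the infinite non-normal subgroup we are after.

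Assume first that $G$ is periodic. Then $G$ is a periodic soluble-by-finite group that is not a \v{C}ernikov group, so Lemma~\ref{lemmino} applied to the finite subgroup $H$ supplies a family $(K_{i})_{i\in I}$ of infinite subgroups with $\bigcap_{i\in I}K_{i}=H$, and the remark above settles this case.

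Assume now that $G$ is not periodic; then $G$ is automatically not a \v{C}ernikov group. Let $H$ be a finite non-normal subgroup. The crucial point is that $N_{G}(H)$ is not periodic. Granting this, choose $t\in N_{G}(H)$ of infinite order; then $\langle t\rangle$ normalises $H$, so $H\langle t\rangle$ is a subgroup, and since $H$ is finite while $\langle t\rangle$ is infinite cyclic one has $\langle t\rangle\cap H=\{1\}$ and $H\langle t\rangle/H\cong\langle t\rangle$. As $\bigcap_{n\ge1}\langle t^{2^{n}}\rangle=\{1\}$ in this infinite cyclic quotient, we get $H=\bigcap_{n\ge1}H\langle t^{2^{n}}\rangle$, an intersection of infinite subgroups, and the remark applies. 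To prove that $N_{G}(H)$ is not periodic, suppose the contrary: fix $t\in G$ of infinite order and put $L=\langle H,t\rangle$, which is polycyclic-by-finite and not periodic; let $P$ be a torsion-free normal subgroup of finite index in $L$, so that $P$ is infinite, residually finite and normalised by $H$. Then $C_{P}(H)\le C_{G}(H)\le N_{G}(H)$ is periodic and torsion-free, whence $C_{P}(H)=\{1\}$. Choose a descending chain $(R_{m})_{m\ge1}$ of characteristic subgroups of finite index of $P$ with $\bigcap_{m}R_{m}=\{1\}$; each $R_{m}$, being characteristic in the normal subgroup $P$ of $L$, is normalised by $H$, so each $R_{m}H$ is an infinite subgroup, and a pigeonhole argument on the finite set $H$ yields $\bigcap_{m}R_{m}H=H$. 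By the remark, some $R_{m}H$ is non-normal in $G$, hence nilpotent of class at most $c$; but then, the torsion subgroup of the nilpotent group $R_{m}H$ being a normal complement to its torsion-free normal subgroup $R_{m}$, the group $H$ centralises $R_{m}$, so that $R_{m}\le C_{P}(H)=\{1\}$, contradicting the infinitude of $R_{m}$. This contradiction establishes the claim and hence the theorem.

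The periodic case is all but immediate from Lemmas~\ref{SF} and~\ref{lemmino}; I expect the genuine obstacle to be the non-periodic case, and inside it the delicate step to be exactly the claim that the normaliser of a finite non-normal subgroup must contain an element of infinite order. This is the point at which the hypothesis on infinite non-normal subgroups has to be used structurally — through the auxiliary subgroups $R_{m}H$ — rather than merely invoked at the very end to conclude nilpotency of the subgroup that has been produced.
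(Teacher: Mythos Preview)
Your proof is correct, and in the periodic case it coincides with the paper's. In the non-periodic case, however, the paper takes a noticeably shorter route: it does not need to show that $N_{G}(H)$ contains an element of infinite order. Instead, for \emph{any} element $g$ of infinite order it forms the polycyclic-by-finite group $\langle H,g\rangle$ and invokes Mal'cev's theorem that every subgroup of a polycyclic-by-finite group is closed in the profinite topology, so that
\[
H=\bigcap_{K\in\mathcal{L}}K,
\]
where $\mathcal{L}$ is the family of subgroups of finite index in $\langle H,g\rangle$ containing $H$. Each such $K$ is infinite and, containing $H$, is not nilpotent of class at most $c$; hence each $K$ is normal in $G$, and so is their intersection $H$, a contradiction. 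Thus $G$ is periodic, and Lemma~\ref{lemmino} finishes as in your argument.

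Your detour through the claim that $N_{G}(H)$ is not periodic, together with the auxiliary subgroups $R_{m}H$ and the direct-product splitting $R_{m}H=R_{m}\times T$, is in effect a hands-on reconstruction of subgroup separability for the particular subgroup $H$; it works, but Mal'cev's theorem dispatches the whole issue in one line. The compensating virtue of your approach is that it is self-contained, using only residual finiteness of polycyclic groups and the fact that torsion elements form a subgroup in a nilpotent group, rather than citing the stronger separability result.
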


\begin{proof}
The group $G$ is soluble-by-finite and locally satisfies the
maximal condition by Lemma \ref{SF}. Assume that $G$ contains a
finite non-normal subgroup $H$ which is not nilpotent (of class at
most $c$). Let $g$ be any element of infinite order of $G$, then
$\left\langle H,g\right\rangle $ is infinite and
polycyclic-by-finite. If $\mathcal{L}$ is the set of all subgroups
of finite index of $\left\langle H,g\right\rangle $ containing $H$
and $K$ is any element of $\mathcal{L}$, then $K$ is an infinite
subgroup which is not nilpotent (of class at most $c$) and hence
$K$ is a normal subgroup of $G$. Since a well
known result due to Mal'cev yields that%
\[
H=\bigcap_{K\in\mathcal{L}}K,
\]
it follows that $H$ is normal in $G$. This contradiction proves
that $G$ must be periodic. Since $H$ cannot be the intersection of
infinite subgroups, it follows from Lemma \ref{lemmino} that $G$
is a \v{C}ernikov group and the proof is completed.
\end{proof}

\medskip

\begin{cor}
\label{cor3bis}Let $G$ be a locally graded group whose infinite
non-normal subgroups are nilpotent of class at most $c$. Then
either $G$ is a \v{C}ernikov group or $\gamma_{c+1}(G)$ is finite.
\end{cor}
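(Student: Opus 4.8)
The plan is to deduce the corollary directly from the two preceding results, Theorem \ref{th3bis} and Theorem \ref{teorema3}. First I would invoke Theorem \ref{th3bis}: since $G$ is a locally graded group whose infinite non-normal subgroups are nilpotent of class at most $c$, it satisfies exactly one of two alternatives — either $G$ is a \v{C}ernikov group, or \emph{every} non-normal subgroup of $G$ (the finite ones included) is nilpotent of class at most $c$. In the first alternative there is nothing left to prove, so the whole argument reduces to the second alternative.

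In the second alternative, $G$ is now a locally graded group all of whose non-normal subgroups are nilpotent of class at most $c$, which is precisely the hypothesis of Theorem \ref{teorema3}. Applying that theorem immediately gives that $\gamma_{c+1}(G)$ is finite, which is the desired conclusion in this case.

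Since both ingredients are already in hand, there is no genuine obstacle here; the corollary is essentially a packaging of the dichotomy. The only point that merits a moment's attention is that Theorem \ref{th3bis} is exactly what promotes the hypothesis on \emph{infinite} non-normal subgroups to the hypothesis on \emph{all} non-normal subgroups required by Theorem \ref{teorema3}, and it does so precisely in the non-\v{C}ernikov case, so the two theorems fit together with no gap.
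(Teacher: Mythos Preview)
Your argument is correct and matches the paper's own proof, which simply says the corollary follows immediately from Theorem~\ref{th3bis} and Theorem~\ref{teorema3}. You have spelled out exactly how those two results combine, and there is no gap.
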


\begin{proof}
This follows immediately from Theorem \ref{th3bis} and Theorem
\ref{teorema3}.
\end{proof}

\bigskip

\section{Proof of the main results}

\medskip In this section Theorem A and Theorem B are proved.
The first two lemmas show that locally graded groups with the
property $\bar{\Gamma} _{k}^{\infty}$ are locally
polycyclic-by-finite.

\medskip

\begin{lem}
\label{Lemma SF}Let $G$ be a locally graded
$\bar{\Gamma}_{k}^{\infty}-$group. Then $G$ is soluble-by-finite.
\end{lem}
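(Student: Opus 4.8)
The plan is to follow the proof of Lemma~\ref{SF}, with the hypothesis there (infinite non-normal subgroups nilpotent) replaced by the finiteness of the set of $k$-th lower central terms of the infinite non-normal subgroups. First I would dispose of the trivial reductions: if $G$ is finite there is nothing to prove, and a \v{C}ernikov group is abelian-by-finite, hence soluble-by-finite, so I may assume $G$ is infinite and not a \v{C}ernikov group. Write $\bar{\Gamma}_{k}^{\infty}(G)=\{F_{0}=1,F_{1},\dots,F_{m}\}$. Two elementary remarks will be used repeatedly: first, since $H\le K$ implies $\gamma_{k}(H)\le\gamma_{k}(K)$, an infinite non-nilpotent subgroup $H$ of $G$ has the property that every $K$ with $H\le K\le G$ is infinite, non-nilpotent and satisfies $\gamma_{k}(H)\le\gamma_{k}(K)$, so $\gamma_{k}(K)\in\bar{\Gamma}_{k}^{\infty}(G)$ for every such $K$ which is non-normal in $G$; second, for any infinite non-normal subgroup $H$ the conjugates $\gamma_{k}(H)^{g}=\gamma_{k}(H^{g})$ all lie in the finite set $\bar{\Gamma}_{k}^{\infty}(G)$, so $N_{G}(\gamma_{k}(H))$, and in particular each $N_{G}(F_{i})$, has finite index in $G$.

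Next I would reduce the statement to the characteristic subgroup $N=\gamma_{k+1}(G)$: since $G/N$ is nilpotent and an extension of a soluble-by-finite group by a nilpotent group is soluble-by-finite, it is enough to prove that $N$ is soluble-by-finite, and for this I would try to show that every infinite proper subgroup of $N$ is finite or nilpotent, so that $N$ has the minimal condition on non-nilpotent subgroups and \cite{DES01} yields the conclusion. So let $H<N$ be infinite and non-nilpotent. By the first remark, every non-normal $K$ with $H\le K\le G$ has $\gamma_{k}(K)\in\{F_{1},\dots,F_{m}\}$, and I would split into two cases. If every $F_{i}$ is finite, then every such $K$ is finite-by-nilpotent, hence nilpotent-by-finite; using the normalizer-index bound and passing to a suitable quotient by the finite subgroups occurring as these $\gamma_{k}(K)$, the part of $G$ lying above $H$ is brought under the hypotheses of Theorem~\ref{th3bis}, which forces $\gamma_{k+1}(G)\le H$ and contradicts $H<N$. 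If some $F_{i}$ is infinite, let $F$ be the normal closure of the finitely many (by the second remark) infinite members of $\bar{\Gamma}_{k}^{\infty}(G)$; then in $G/F$ the image of every infinite non-normal subgroup whose $k$-th lower central term is infinite becomes (nilpotent of class less than $k$)-by-finite, so Theorem~\ref{teorema3} and Corollary~\ref{cor3bis} apply to $G/F$, while $F$ must be shown to be soluble-by-finite directly; combining the two pieces yields that $N$, and therefore $G$, is soluble-by-finite.

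The principal obstacle is the loss of the clean step ``$G/H$ is a Dedekind group'' available in Lemma~\ref{SF}: non-normality of the $G$-overgroups $K$ of $H$ is not forced here, only the constraint $\gamma_{k}(K)\in\{F_{1},\dots,F_{m}\}$, so the Dedekind quotient must be replaced by a quotient to which Theorem~\ref{teorema3} or Theorem~\ref{th3bis} can be applied, and it is precisely in carrying out this replacement that the normalizer-index bound and the dichotomy on the finiteness of the $F_{i}$ come into play. A subsidiary delicate point, needing an argument of its own, is the solubility-by-finiteness of the normal subgroup $F$ generated by the infinite members of $\bar{\Gamma}_{k}^{\infty}(G)$: one knows that $F$ is a product of finitely many conjugates (each with finite-index normalizer, by the second remark) of subgroups of the form $\gamma_{k}(H)$ with $H$ infinite and non-normal, so the task reduces to controlling the structure of a single such subgroup $\gamma_{k}(H)$ and then assembling finitely many of its conjugates.
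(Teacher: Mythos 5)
There is a genuine gap, and it sits exactly where the real content of the lemma lies. Your argument is essentially complete only in the case where every member of $\bar{\Gamma}_{k}^{\infty}(G)$ is finite: there each $F_{i}$ is a finite subgroup with finitely many conjugates, its normal closure is finite by Dietzmann's lemma, and modulo the finite join of these closures every infinite non-normal subgroup becomes nilpotent of class less than $k$, so Lemma~\ref{SF} applies to the quotient. (Even in that case your formulation is shaky: Theorem~\ref{th3bis} yields a dichotomy --- \v{C}ernikov, or all non-normal subgroups nilpotent --- and cannot ``force $\gamma_{k+1}(G)\le H$''; the Dedekind-quotient step of Lemma~\ref{SF} that produces such containments is, as you yourself note, unavailable, and the detour through the minimal condition on non-nilpotent subgroups of $\gamma_{k+1}(G)$ is not needed once you pass to the quotient.) The substantive case, where some member of $\bar{\Gamma}_{k}^{\infty}(G)$ is infinite, is left open: everything there hinges on knowing that an infinite member $L=\gamma_{k}(H)$ is itself soluble-by-finite, for otherwise the normal subgroup $F$ you form need not be soluble-by-finite and you cannot even guarantee that $G/F$ is locally graded (local gradedness passes to quotients only modulo suitably restricted kernels, which is why the paper invokes Lemma 4 of \cite{DMdG06} only after the kernel is known to be soluble-by-finite). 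You flag this as a ``subsidiary delicate point'' but give no argument, and no direct structural control of a single $\gamma_{k}(H)$ is in sight; moreover, even granting each conjugate of an infinite $F_{i}$ soluble-by-finite, the join of finitely many conjugates is not automatically soluble-by-finite, since the conjugates need not normalize one another.

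The missing engine is an induction on the number $t$ of non-trivial members of $\bar{\Gamma}_{k}^{\infty}(G)$, which is how the paper proceeds. For $t=0$ one is in the situation of Lemma~\ref{SF}. For $t\ge 1$, pick an infinite non-normal $H$ with $L=\gamma_{k}(H)\neq\{1\}$; since subgroups non-normal in $L$ are non-normal in $G$ and $L$ cannot equal $\gamma_{k}(X)$ for a proper subgroup $X$ of $L$, one has $\bar{\Gamma}_{k}^{\infty}(L)\subseteq\bar{\Gamma}_{k}^{\infty}(G)$ with $L\notin\bar{\Gamma}_{k}^{\infty}(L)$, so $L$ is soluble-by-finite by induction; then, instead of forming the normal closure of $L$, one works inside $N=N_{G}(L)$, which has finite index by your own conjugacy remark, and applies the induction to the locally graded $\bar{\Gamma}_{k}^{\infty}$-group $N/L$. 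This simultaneously settles the solubility-by-finiteness of the infinite members and avoids the join-of-conjugates problem that your sketch runs into. Without this (or an equivalent) inductive mechanism, your proposal does not prove the lemma.
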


\begin{proof}
We argue by induction on the number $t$ of non-trivial subgroups
in the set $\bar{\Gamma}_{k}^{\infty}(G)$. If $t=0$ the group $G$
is soluble-by-finite by Lemma \ref{SF}, so that $t\geq1$ and
consider any infinite non-normal subgroup $H$ of $G$ such that
$L=\gamma_{k}(H)\neq\{1\}$. Clearly
$L^{g}=\gamma_{k}(H^{g})\in\bar{\Gamma}_{k}^{\infty}(G)$ for every
$g\in G$, so that $L$ has finitely many conjugates and hence
$N=N_{G}(L)$ is a subgroup of finite index of $G$. Since
$L\notin\bar{\Gamma}_{k}^{\infty}(L)$, the set
$\bar{\Gamma}_{k}^{\infty}(L)$ contains less than $t$ non-trivial
subgroups, and hence $L$ is soluble-by-finite by induction on $t$.
Then the $\bar{\Gamma }_{k}^{\infty}-$group $N/L$ is likewise
locally graded (see \cite{DMdG06}, Lemma 4) and hence, since also
the set $\bar{\Gamma}_{k}^{\infty}(N/L)$ contains less than $t$
non-trivial subgroups, again induction on $t$ gives that $N/L$ is
soluble-by-finite. Therefore $N$ is soluble-by-finite and so $G$
is likewise soluble-by-finite.
\end{proof}

\medskip

\begin{lem}
\label{lmax}Let $G$ be a locally graded
$\bar{\Gamma}_{k}^{\infty}-$group. Then $G$ locally satisfies the
maximal condition.
\end{lem}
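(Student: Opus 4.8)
The plan is to imitate the inductive scheme of Lemma \ref{Lemma SF}. By Lemma \ref{Lemma SF} the group $G$ is soluble-by-finite, so it is enough to show that every finitely generated subgroup of $G$ is polycyclic-by-finite, and I would argue by induction on the number $t$ of non-trivial subgroups in the finite set $\bar{\Gamma}_{k}^{\infty}(G)$. If $t=0$, the infinite non-normal subgroups of $G$ are nilpotent and $G$ locally satisfies the maximal condition by Lemma \ref{SF}. Suppose $t\geq1$ and fix an infinite non-normal subgroup $H$ with $L=\gamma_{k}(H)\neq\{1\}$. As in Lemma \ref{Lemma SF}, the conjugates of $L$ all lie in $\bar{\Gamma}_{k}^{\infty}(G)$, so $L$ has finitely many conjugates, $N=N_{G}(L)$ has finite index in $G$ and $L\triangleleft N$; moreover $\bar{\Gamma}_{k}^{\infty}(L)$ has fewer than $t$ non-trivial subgroups and, by \cite{DMdG06} Lemma~4, so has $\bar{\Gamma}_{k}^{\infty}(N/L)$ for the locally graded $\bar{\Gamma}_{k}^{\infty}$-group $N/L$. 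By induction $L$ and $N/L$ locally satisfy the maximal condition, and since $|G:N|$ is finite it suffices to deduce this for $N$.

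The delicate point is that ``locally satisfying the maximal condition'' is \emph{not} preserved by extensions (the lamplighter group $C_{p}\wr C_{\infty}$ is an extension of an infinite elementary abelian group by an infinite cyclic group), so the bare inductive information on $L$ and $N/L$ is insufficient, and one must use the smaller value of $t$ for $L$ more forcefully. I would carry Theorem~B through the same induction, so that $L$, having fewer than $t$ non-trivial subgroups in $\bar{\Gamma}_{k}^{\infty}(L)$, is known to be an irreducible \v{C}ernikov group or to satisfy $|\gamma_{k}(L)|<\infty$. Let $F$ be a finitely generated subgroup of $N$; then $F/(F\cap L)\cong FL/L$ is a finitely generated subgroup of the soluble-by-finite, locally max group $N/L$, hence polycyclic-by-finite and so finitely presented, and therefore $F\cap L$ is the normal closure in $F$ of a finite set. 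In the \v{C}ernikov case $F\cap L$ is then \v{C}ernikov, and since a finitely generated soluble-by-finite group has no subgroup isomorphic to $C_{p^{\infty}}$ this forces $F\cap L$ finite, whence $F$ is polycyclic-by-finite. In the case $|\gamma_{k}(L)|<\infty$, after factoring out the finite normal subgroup $\gamma_{k}(L)$ of $N$ (permissible by \cite{DMdG06} Lemma~4, without increasing $t$) we may take $L$ nilpotent of class at most $k-1$; then $F\cap L$ is nilpotent of class $\leq k-1$, it is finitely generated as a normal subgroup of $F$, and $F/(F\cap L)$ is polycyclic-by-finite, so the remaining task is to rule out the wreath-product-like configurations that such an extension might a priori realize.

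The main obstacle is precisely this last step. I would argue by contradiction using the hypothesis on $G$ itself: if the finitely generated soluble-by-finite group $F$ were not polycyclic-by-finite, then by the structure theory of finitely generated soluble groups (P.~Hall, D.J.S.~Robinson), in particular the description of just-non-polycyclic groups, $F$ would have a finitely generated quotient $\bar{F}$ containing an explicit infinite family $\{\bar{H}_{i}\}$ of infinite subgroups, non-normal in $\bar{F}$, with pairwise distinct $\gamma_{k}(\bar{H}_{i})$ --- the typical models being the subgroups $\langle v,t^{\,i}\rangle$ ($i\geq1$) of a wreath product $A\wr C_{\infty}$ and their analogues in a Baumslag--Solitar group $BS(1,m)$. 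Pulling back, the preimages $H_{i}\leq F\leq G$ are infinite and non-normal in $F$, hence non-normal in $G$, and $\gamma_{k}(H_{i})$ maps onto $\gamma_{k}(\bar{H}_{i})$, so the subgroups $\gamma_{k}(H_{i})$ are pairwise distinct; thus $\bar{\Gamma}_{k}^{\infty}(G)$ would be infinite, a contradiction. Hence every finitely generated subgroup of $N$, and of $G$, is polycyclic-by-finite, which completes the induction; everything else is a routine transcription of the scheme already used for solubility-by-finiteness.
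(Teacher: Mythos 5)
Your reduction steps (solubility-by-finiteness, induction on $t$, passing to $N=N_G(L)$ and to $L$, $N/L$, the observation that local max is not extension-closed, and the use of finite presentability of $F/(F\cap L)$) are fine as far as they go, but the argument has genuine gaps exactly where you yourself locate the difficulty. The decisive final step --- ruling out a finitely generated non-polycyclic subgroup $F$ by passing to a ``just non-polycyclic'' quotient with an explicit infinite family of infinite non-normal subgroups having pairwise distinct $\gamma_k$ --- is not a proof. To extract a quotient that is minimal with respect to being non-polycyclic you need the maximal condition on normal subgroups, which a finitely generated soluble(-by-finite) group need not satisfy; this is available only after a reduction to the metabelian case, which you never make (the paper makes precisely this reduction, by choosing a counterexample of minimal derived length). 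Moreover one of your two ``typical models'' fails outright: in $BS(1,m)=\langle a,t\mid a^t=a^m\rangle$ the subgroup $\langle a,t^{\,i}\rangle$ contains the full normal closure $\langle a\rangle^{G}\cong\mathbb{Z}[1/m]$ (since $t^{\,i}at^{-i}$ is an $m^{i}$-th root of $a$), so $\langle a,t^{\,i}\rangle=\langle a\rangle^{G}\rtimes\langle t^{\,i}\rangle$ is \emph{normal}, and no such family exists there; even in the wreath-product model the needed properties are only asserted. Two further points are problematic: the claim that a finitely generated soluble-by-finite group has no subgroup isomorphic to $C_{p^{\infty}}$ is false in general (P.~Hall constructed finitely generated soluble groups whose centre contains a Pr\"ufer group; the claim holds for residually finite, e.g.\ finitely generated metabelian, groups, but again you have not reduced to that case); and ``carrying Theorem~B through the same induction'' is circular relative to the architecture of the paper, since Theorem~B rests on Lemma~\ref{lmax} itself (via Lemma~\ref{lemmaperiodic} and Theorem~A), so a simultaneous induction would have to be set up and checked for well-foundedness, which you do not do.

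For comparison, the paper avoids all of this: it takes a finitely generated soluble counterexample of minimal derived length (forcing $G$ metabelian) and minimal $t$, intersects the normalizers of the non-trivial members $H_1,\dots,H_t$ of $\bar{\Gamma}_{k}^{\infty}(G)$ to get a finite-index subgroup $N$ containing each $H_i$ with every $N/H_i$ polycyclic, so that $H=H_1\cap\dots\cap H_t\neq\{1\}$; then residual finiteness of finitely generated metabelian groups yields a finite-index subgroup $K$ avoiding some $1\neq x\in H$, whence every infinite subgroup of $K$ not normal in $K$ has $\gamma_k$ different from all $H_i$ and hence trivial, so $K$ is polycyclic by Lemma~\ref{SF}, a contradiction. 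Your proposal would need the metabelian reduction and a correct, worked-out replacement for the final step before it could be considered a proof.
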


\begin{proof}
The group $G$ is soluble-by-finite by Lemma \ref{Lemma SF} and so,
in order to prove the lemma, it can be supposed that $G$ is a
finitely generated soluble group. Assume that the statement is
false and choose $G$ as a counterexample with minimal derived
length $d$ and such that the set $\{H_{1},\ldots,H_{t}\}$ of all
non-trivial subgroups which belong to
$\bar{\Gamma}_{k}^{\infty}(G)$ has smallest order $t$; note that
$t\geq1$ by Lemma \ref{SF}. Observe further that $d\neq1$ and we
cannot have $d>2$, otherwise $G^{(d-2)}$ and $G/G^{(d-2)}$ would
both be polycyclic, by the minimal choice of $d$, and so $G$ would
be likewise polycyclic; therefore $G$ is metabelian. Since each
$H_{i}$ has finitely many conjugates, the subgroup $N_{G}(H_{i})$
has finite index in $G$ and hence also the subgroup
$N=N_{G}(H_{1})\cap\ldots\cap N_{G}(H_{t})$ has likewise finite
index in $G$; in particular, $N$ is not polycylic, moreover $N$
contains each $H_{i}$ by the minimal choice of $t$. Since
$\bar{\Gamma}_{k}^{\infty}(N/H_{i})$ contains less than $t$
non-trivial subgroups, the factor $N/H_{i}$ is polycyclic and
hence, if $H=H_{1}\cap\ldots\cap H_{t}$, also $N/H$ is polycyclic;
in particular, $H\neq\{1\}$. Let $x$ be any non-trivial element of
$H$. Since $G$ is residually finite (see \cite{R72} Part 2,
Theorem 9.51), there exists a subgroup of finite index $K$ of $G$
such that $x\notin K$. If $X$ is any infinite subgroup of $K$,
then $\gamma_{k}(X)\neq H_{i}$ for all $i=1,\ldots,t,$ and so
either $\gamma_{k}(X)=\{1\}$ or $X$ is a normal subgroup of $G$;
therefore all infinite non-normal subgroups of $K$ are nilpotent
and thus the finitely generated subgroup $K$ is polycyclic by
Lemma \ref{SF}. It follows that $G$ is polycyclic and this
contradiction concludes the proof.
\end{proof}

\medskip
In what follows some lemmas are given in order to prove that the
$k$th term of the lower central series of any locally graded
$\bar{\Gamma}_{k}^{\infty}-$group is periodic.

\medskip

\begin{lem}
\label{lemma engel}Let $G$ be a $\bar{\Gamma}_{k}^{\infty}-$group
and let $A$ be a finitely generated abelian normal subgroup of
$G$. If $A$ is torsion-free and $g\in G$, then
$\gamma_{k}(\left\langle A,g\right\rangle )=\{1\}$.
\end{lem}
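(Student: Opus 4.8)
The plan is to argue by contradiction. Put $M=\langle A,g\rangle=A\langle g\rangle$ and suppose $\gamma_k(M)\neq\{1\}$. Since $M/A$ is cyclic we have $\gamma_k(M)\leq\gamma_2(M)\leq A$, so $\gamma_k(M)$ is a non-trivial finitely generated subgroup of the torsion-free abelian group $A$; in particular it is free abelian of positive rank, and hence the subgroups $\gamma_k(M)^m$ ($m\geq1$) are pairwise distinct and all non-trivial.

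For each $m\geq1$ let $A_m=\{a^m\;:\;a\in A\}$, a characteristic (hence $G$-invariant, and in particular $g$-invariant) subgroup of finite index in $A$, and put $H_m=\langle A_m,g\rangle=A_m\langle g\rangle$. Each $H_m$ is infinite. Since $A_m$ is abelian and normal in $H_m$ with $H_m/A_m$ cyclic, an easy induction along the lower central series gives $\gamma_i(H_m)=[A_m,{}_{i-1}g]$ for all $i\geq2$ (and likewise $\gamma_i(M)=[A,{}_{i-1}g]$); as $x\mapsto x^m$ is an endomorphism of $A$ commuting with conjugation by $g$, this yields $\gamma_k(H_m)=\gamma_k(M)^m$. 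Thus the subgroups $\gamma_k(H_m)$ are pairwise distinct and non-trivial. If infinitely many $H_m$ were non-normal in $G$, their $k$th lower central terms would be infinitely many distinct members of $\bar{\Gamma}_k^{\infty}(G)$, which is impossible; hence there is $m_0$ such that $H_m\trianglelefteq G$ for every $m\geq m_0$.

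It remains to derive a contradiction from this. Intersecting the normal subgroups $H_m$ ($m\geq m_0$) and using that $\bigcap_{m\geq m_0}A_m=\{1\}$ in the finitely generated torsion-free abelian group $A$, a short computation writing the elements of $M$ in the form $ag^j$ with $a\in A$ shows that $\bigcap_{m\geq m_0}H_m=\langle g\rangle$; hence $\langle g\rangle\trianglelefteq G$ and therefore $[g,A]\leq\langle g\rangle\cap A$. If $\langle g\rangle\cap A=\{1\}$ this forces $[g,A]=\{1\}$, so $\gamma_2(M)=\{1\}$ and a fortiori $\gamma_k(M)=\{1\}$, contrary to assumption. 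Otherwise $g$ has infinite order, $\langle g\rangle\cap A=\langle g^{r}\rangle$ for the least $r\geq1$ with $g^{r}\in A$, and then $[g,A]\leq\langle g^{r}\rangle\leq C_A(g)$ (as $g^{r}\in A$ is central in $M$), so $[A,g,g]=\{1\}$ and conjugation by $g$ induces a unipotent automorphism of $A$; this automorphism has order dividing $r$ (because $g^{r}$ lies in the abelian group $A$), and a unipotent automorphism of finite order of a torsion-free abelian group is trivial, so again $[g,A]=\{1\}$ and $\gamma_k(M)=\{1\}$ — a contradiction.

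The only delicate point is the identification $\bigcap_{m\geq m_0}H_m=\langle g\rangle$, where one must handle separately the case $\langle g\rangle\cap A\neq\{1\}$ (equivalently, $M/A$ finite); everything else is routine commutator bookkeeping in the metabelian group $M$.
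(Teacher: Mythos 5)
Your proof is correct, and although it starts from the same device as the paper --- the subgroups $H_m=\langle A^m,g\rangle$ together with the identity $\gamma_k(H_m)=\gamma_k(\langle A,g\rangle)^m$ --- it exploits the finiteness of $\bar{\Gamma}_k^{\infty}(G)$ in the opposite direction. The paper splits at the outset on whether $A\cap\langle g\rangle$ is trivial: in the trivial case, after disposing of the subcase $\langle g\rangle\trianglelefteq G$, it uses $\bigcap_{n\in I}\langle A^n,g\rangle=\langle g\rangle$ to produce infinitely many non-normal $\langle A^n,g\rangle$, so that the chain $\gamma_k(\langle A^{n!},g\rangle)$ must stabilise at a divisible, hence trivial, subgroup (residual finiteness of the finitely generated metabelian group $\langle A,g\rangle$); the case $A\cap\langle g\rangle\neq\{1\}$ is then reduced to the first by splitting $A/(A\cap\langle g\rangle)$ into its finite and torsion-free parts, invoking Schur's theorem, and finishing with a central power argument. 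You instead argue by contradiction: since $\gamma_k(M)$ would be free abelian of positive rank, the subgroups $\gamma_k(H_m)=\gamma_k(M)^m$ are pairwise distinct, so only finitely many $H_m$ can be non-normal; intersecting the cofinitely many normal ones gives $\langle g\rangle\trianglelefteq G$, hence $[A,g]\leq A\cap\langle g\rangle$, and the endgame is either immediate or the observation that conjugation by $g$ induces a unipotent automorphism of finite order of the torsion-free group $A$, which must be trivial. This buys a uniform treatment with no divisibility/residual-finiteness step and no Schur-type reduction, at the price of the one delicate point you flagged, namely $\bigcap_{m\geq m_0}H_m=\langle g\rangle$ when $A\cap\langle g\rangle=\langle g^r\rangle\neq\{1\}$; that identity does hold: writing an element of the intersection as $ag^j=b_mg^{j_m}$ with $b_m\in A^m$ gives $a\in A^m\langle g^r\rangle$ for every $m\geq m_0$, and $\bigcap_{m}A^m\langle g^r\rangle=\langle g^r\rangle$ because $\bigcap_{N}N\bar{A}=0$ in the finitely generated abelian group $\bar{A}=A/\langle g^r\rangle$. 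So there is no gap, but that computation is worth writing out in full, since $\bar{A}$ may have torsion and the naive ``unique normal form $ag^j$'' argument only works when $A\cap\langle g\rangle=\{1\}$.
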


\begin{proof}
Assume first that $A\cap\left\langle g\right\rangle =\{1\}$. If
$\left\langle g\right\rangle $ were a normal subgroup of $G$,
$[A,g]$ would be contained in $A\cap\left\langle g\right\rangle
=\{1\}$ and so $[A,g]=\{1\}$; thus we may suppose that
$\left\langle g\right\rangle $ is not a normal subgroup of $G$.
Since for every infinite subset $I$ of $\mathbb{N}$ we have
\[
\bigcap_{n\in I}\left\langle A^{n},g\right\rangle =\left\langle
g\right\rangle
\text{,}%
\]
it follows that the subgroup $\left\langle A^{n},g\right\rangle $
is normal in $G$ only for finitely many positive integers $n$ and
hence, since the set $\bar{\Gamma}_{k}^{\infty}(G)$ is finite,
there is a positive integer
$\ell$ such that%
\[
\gamma_{k}\left(  \left\langle A^{_{\ell!}},g\right\rangle \right)
=\gamma_{k}\left(  \left\langle A^{_{(\ell+1)!}},g\right\rangle
\right) =\gamma_{k}\left(  \left\langle
A^{_{(\ell+2)!}},g\right\rangle \right) =\ldots
\]
Therefore $\gamma_{k}\left(  \left\langle
A^{_{\ell!}},g\right\rangle \right) =\gamma_{k}\left( \left\langle
A,g\right\rangle \right)  ^{\ell!}$ is a divisible subgroup of
$\left\langle A,g\right\rangle $. But $\left\langle
A,g\right\rangle $ is finitely generated and metabelian, so that
it is residually finite (see \cite{R72} Part 2, Theorem 9.51) and
hence $\gamma _{k}\left(  \left\langle A,g\right\rangle \right)
^{\ell!}=\{1\}.$ Since $\gamma_{k}\left(  \left\langle
A,g\right\rangle \right)  $ is contained in $A$, which is
torsion-free, it follows that $\gamma_{k}\left(  \left\langle
A,g\right\rangle \right)  =\{1\}$.

In the general case, let
\[
A\cap\left\langle g\right\rangle =\left\langle g^{m}\right\rangle
\neq\{1\}
\]
and put%
\[
\frac{A}{A\cap\left\langle g\right\rangle
}=\frac{E}{A\cap\left\langle
g\right\rangle }\times\frac{F}{A\cap\left\langle g\right\rangle }%
\]
where $E/A\cap\left\langle g\right\rangle $ is finite and
$F/A\cap\left\langle g\right\rangle $ is torsion-free. Clearly,
$A\cap\left\langle g\right\rangle $ is contained in
$Z(\left\langle A,g\right\rangle )$ and the factor group
$\left\langle E,g\right\rangle /A\cap\left\langle g\right\rangle $
is finite, so that $\left\langle E,g\right\rangle $ is
central-by-finite. Therefore $\left[  E,g\right]  $ is a finite
subgroup of $A$ (see \cite{R72} Part 1, Theorem~4.12) and so
$\left[E,g\right] =\{1\}$. Since the factor $A/E$ is a finitely
generated abelian torsion-free normal subgroup of $\left\langle
A,g\right\rangle /E$ and $\left\langle gE\right\rangle \cap
A/E=\{1\}$, the first part of this proof yields that
$\gamma_{k}\left( \left\langle A,g\right\rangle /E\right)=~\{1\}.$
Therefore $\gamma_{k}(\left\langle A,g\right\rangle )\leq E\leq
Z\left( \left\langle A,g\right\rangle \right)$ and so
\[
\gamma_{k}(\left\langle A,g\right\rangle )^{m}=\left[  \gamma_{k-1}%
(\left\langle A,g\right\rangle ),\left\langle g\right\rangle
\right] ^{m}=\left[  \gamma_{k-1}(\left\langle A,g\right\rangle
),\left\langle g^{m}\right\rangle \right]  .
\]
Since $g^{m}\in Z(\left\langle A,g\right\rangle )$, it follows
that $\gamma _{k}(\left\langle A,g\right\rangle )^{m}=~\{1\}$,
thus $ \gamma_{k}(\left\langle A,g\right\rangle )=~\{1\} $ because
$\gamma_{k}\left(  \left\langle A,g\right\rangle \right) $ is
contained in $A$ which is torsion-free.
\end{proof}

\medskip

\begin{lem}
\label{lemmahyper}Let $G$ be a finitely generated locally graded
$\bar{\Gamma }_{k}^{\infty}-$group and let $A$ be a torsion-free
abelian normal subgroup of $G$. Then $A$ is contained in the
hypercentre of $G$.
\end{lem}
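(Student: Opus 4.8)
The plan is to use the finiteness conditions to turn the conjugation action of $G$ on $A$ into a linear action on a finite-dimensional rational vector space, to show via Lemma \ref{lemma engel} that this action is unipotent, and then to apply a classical triangularization theorem to produce a $G$-central series of $A$. To begin, $G$ is finitely generated, soluble-by-finite by Lemma \ref{Lemma SF}, and locally satisfies the maximal condition by Lemma \ref{lmax}; hence $G$ itself satisfies the maximal condition and is therefore polycyclic-by-finite. In particular every subgroup of $G$ is finitely generated, so the torsion-free abelian normal subgroup $A$ is free abelian of finite rank $n$. Put $V = A \otimes_{\mathbb{Z}} \mathbb{Q}$, an $n$-dimensional $\mathbb{Q}$-vector space on which $G$ acts by conjugation (extending the action on $A$ by $\mathbb{Q}$-linearity), with kernel $C = C_{G}(A)$.

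Next I would fix $g \in G$ and apply Lemma \ref{lemma engel}: since $A$ is a finitely generated torsion-free abelian normal subgroup of the $\bar{\Gamma}_{k}^{\infty}$-group $G$, we get $\gamma_{k}(\langle A, g\rangle) = \{1\}$. Because $A$ is abelian and normal, this says precisely that $[A, {}_{k-1}\, g] = \{1\}$, that is, $a(g-1)^{k-1} = 0$ for every $a \in A$ (writing the module operation of $\langle g\rangle$ additively on $A$). As $A$ spans $V$ over $\mathbb{Q}$ and $(g-1)^{k-1}$ is $\mathbb{Q}$-linear, the automorphism induced by $g$ on $V$ satisfies $(g-1)^{k-1} = 0$. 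Thus $G/C$ is a group of unipotent linear transformations of the finite-dimensional space $V$.

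Now I would invoke the classical theorem of Kolchin: a group of unipotent matrices over a field stabilizes a complete flag. This yields a chain $0 = V_{0} < V_{1} < \cdots < V_{n} = V$ of $G$-submodules of $V$ on whose factors $V_{i}/V_{i-1}$ the group $G$ acts trivially. Setting $A_{i} = A \cap V_{i}$, one obtains a chain $\{1\} = A_{0} \le A_{1} \le \cdots \le A_{n} = A$ of normal subgroups of $G$ satisfying $[A_{i}, G] \le A \cap V_{i-1} = A_{i-1}$, using that $A \trianglelefteq G$ and $[V_{i}, G] \le V_{i-1}$. An immediate induction gives $A_{i} \le Z_{i}(G)$ for each $i$, and hence $A = A_{n} \le Z_{n}(G)$ is contained in the hypercentre of $G$.

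The main obstacle is the passage from ``each individual element of $G$ acts unipotently on $V$'' to ``$G$ acts by simultaneously triangular matrices'': this is genuinely nontrivial and fails for general (e.g.\ non-polycyclic-by-finite) linear groups, so a naive elementwise argument does not suffice and one really needs Kolchin's theorem. What makes that theorem available here is exactly the earlier reduction showing $G$ to be polycyclic-by-finite, which forces $A$ to have finite rank and hence $V$ to be finite-dimensional.
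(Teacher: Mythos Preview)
Your argument is correct, but the paper takes a shorter, purely group-theoretic route. Both proofs begin the same way: Lemmas~\ref{Lemma SF} and~\ref{lmax} make $G$ polycyclic-by-finite, and Lemma~\ref{lemma engel} then gives $[a,_{\,k-1}\,g]=1$ for every $a\in A$ and $g\in G$. At that point the paper simply observes that each element of $A$ is therefore a bounded right Engel element of the Noetherian group $G$ and invokes Baer's theorem (Robinson, Part~2, Theorem~7.21) to conclude that $A$ lies in the hypercentre. You instead pass to $V=A\otimes_{\mathbb Z}\mathbb Q$, note that every $g$ acts unipotently, apply Kolchin's theorem to obtain a $G$-invariant complete flag, and intersect back with $A$ to produce an explicit $G$-central series, yielding $A\le Z_{n}(G)$ with $n=\operatorname{rank}A$. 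So you have traded one black box (Baer's Engel theorem for groups with max) for another (Kolchin's triangularization for unipotent linear groups); your approach gives a concrete bound on the central height of $A$, while the paper's is a one-line citation. One small expository point: Kolchin's theorem does not require any finiteness hypothesis on the acting group---it applies to \emph{any} group of unipotent automorphisms of a finite-dimensional vector space---so the polycyclic-by-finite reduction is needed only to guarantee that $A$ has finite rank, exactly as you say in your final sentence.
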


\begin{proof}
The group $G$ is polycyclic-by finite by Lemma \ref{Lemma SF} and
Lemma
\ref{lmax}, so that Lemma \ref{lemma engel} yields that $\gamma_{k}%
(\left\langle A,g\right\rangle )$ is trivial for any $g\in G$. In
particular, each element $a$ of $A$ is such that $[a,_{k-1~}g]=1$
for all $g\in G$, so that $A$ is contained in the hypercentre of
$G$ (see \cite{R72} Part 2, Theorem 7.21).
\end{proof}

\medskip

\begin{lem}
\label{lemmanil}Let $G$ be a finitely generated locally graded
$\bar{\Gamma }_{k}^{\infty}-$group. If $G$ has no non-trivial
periodic normal subgroups, then $G$ is nilpotent.
\end{lem}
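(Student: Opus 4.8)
The plan is to exploit the fact that a finitely generated polycyclic-by-finite group has a torsion-free abelian normal subgroup whose presence we can leverage via Lemma~\ref{lemmahyper}. First I would note that $G$ is polycyclic-by-finite by Lemmas~\ref{Lemma SF} and~\ref{lmax}, hence soluble-by-finite; replacing $G$ by a suitable subgroup of finite index I may assume $G$ is polycyclic. Since $G$ has no non-trivial periodic normal subgroups and is polycyclic, the Fitting subgroup $F=\mathrm{Fit}(G)$ is a torsion-free finitely generated nilpotent group, and in fact $C_G(F)\leq F$ (standard for polycyclic groups with no non-trivial finite normal subgroups). The centre $Z(F)$ — or more conveniently the last non-trivial term of the upper central series of $F$, or an isolator-type argument — provides a non-trivial torsion-free abelian normal subgroup $A$ of $G$.

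The key step is then an induction, say on the Hirsch length of $G$ (or on the derived length, or on the Fitting height). Pick such an $A$ as above with $A\neq\{1\}$. By Lemma~\ref{lemmahyper}, $A$ is contained in the hypercentre of $G$; since $G$ is finitely generated and polycyclic-by-finite the hypercentre coincides with some finite term $Z_n(G)$, so in particular $A\cap Z(G)\neq\{1\}$ — more precisely I can take $A$ itself with $A\le Z_n(G)$, and then $B=A\cap Z(G)$ is a non-trivial torsion-free central subgroup of $G$ (torsion-freeness of $B$ follows since $B\le A$). Now pass to $\bar G=G/B$. This quotient is again finitely generated, locally graded, and a $\bar\Gamma_k^\infty$-group by \cite{DMdG06}, Lemma~4. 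The subtlety is that $G/B$ may acquire periodic normal subgroups, so the induction hypothesis does not apply directly to $\bar G$.

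To handle this, let $T/B$ be the (periodic part of the) torsion subgroup of $\bar G$, i.e. the largest periodic normal subgroup of $G/B$; since $B$ is central and torsion-free, $T$ is a finitely generated nilpotent-by-(the relevant structure) normal subgroup of $G$ which is an extension of the torsion-free central $B$ by a finite group, hence $T$ is itself torsion-free nilpotent of class at most $2$, and $T'$ is finite, so $T'=\{1\}$ and $T$ is abelian. But then, since $B\le Z(G)$ and $T/B$ is finite, $[T,G]\le B$ is finitely generated central and $T/Z(G)\cap T$ is finite, forcing (by the argument as in Lemma~\ref{lemma engel}, or directly) $T\le Z_m(G)$ for some $m$; enlarging $A$ we may assume $T=\{1\}$, i.e. $G/B$ already has no non-trivial periodic normal subgroups, whence $G/B$ is nilpotent by the induction hypothesis. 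Finally $G$ is nilpotent-by-(central), so $G$ is nilpotent: from $\gamma_{c+1}(G)\le B\le Z(G)$ we get $\gamma_{c+2}(G)=[\gamma_{c+1}(G),G]=\{1\}$.

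The main obstacle, and the place where the argument must be made genuinely careful rather than routine, is precisely the control of periodic normal subgroups of $G/B$: one cannot simply quotient out a central torsion-free subgroup and expect the "no periodic normal subgroups" hypothesis to persist, so the bulk of the work is showing that whatever periodic normal subgroup appears in $G/B$ is forced to be central-by-finite over $B$ and hence can be absorbed into the hypercentre, allowing a clean induction. An alternative, possibly cleaner, route avoiding this issue is to induct on the Fitting height $h$ of $G$: if $h=1$ then $G=F$ is already nilpotent, and if $h\geq 2$ one applies Lemma~\ref{lemmahyper} to a torsion-free abelian normal subgroup of $G$ sitting inside $F$ to push $F$ into the hypercentre and thereby collapse the Fitting height, but this still requires the same verification that no periodic obstruction is introduced.
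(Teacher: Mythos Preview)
Your reduction ``replacing $G$ by a suitable subgroup of finite index I may assume $G$ is polycyclic'' is wrong: nilpotency of a finite-index subgroup does not yield nilpotency of $G$, and the subgroup need not inherit the absence of periodic normal subgroups. Fortunately the step is unnecessary: if $N\trianglelefteq G$ is polycyclic of finite index and $A$ is the last non-trivial term of its derived series, then $A$ is abelian and normal in $G$, and its (finite, characteristic) torsion part is normal in $G$ and hence trivial, so $A$ is already the non-trivial torsion-free abelian normal subgroup you need. With this fix your Hirsch-length induction goes through; note also that once $T$ is known to be torsion-free abelian with $B\le Z(G)$ and $T/B$ finite, for each $g\in G$ the map $t\mapsto[t,g]$ is a homomorphism $T\to T$ factoring through the finite group $T/B$, so its image is finite and hence trivial, giving $T\le Z(G)$ outright---this is what makes ``enlarging $A$'' legitimate and lets you apply the inductive hypothesis to $G/T$.

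The paper uses the same two ingredients---Lemma~\ref{lemmahyper} for torsion-free abelian normal subgroups, and the Schur/Baer argument to dispose of periodic obstructions---but packages them without an external induction. It shows directly that $\bar G=G/\bar Z(G)$ has no non-trivial periodic normal subgroups (your $T$-argument, now with $Z_n(G)$ in the role of $B$ and an inner induction on the nilpotency class of $T$ since here $T$ need not be abelian) and, by Lemma~\ref{lemmahyper}, no non-trivial torsion-free abelian normal subgroups either; hence the last term of the derived series of a polycyclic normal subgroup of finite index in $\bar G$ is forced to be trivial, so $\bar G$ is finite, $\gamma_{n+1}(G)$ is finite, and therefore $\gamma_{n+1}(G)=\{1\}$. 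This is cleaner in that one never has to check that the full hypotheses descend to a quotient for an inductive call.
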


\begin{proof}
The group $G$ contains a polycyclic normal subgroup of finite
index by Lemma \ref{Lemma SF} and Lemma \ref{lmax}, so that the
upper central series of any section of $G$ becomes stationary
after a finite number of steps and thus, in particular, the
hypercentre $\bar{Z}(G)$ of $G$ coincides with $Z_{n}(G)$ for some
a positive integer $n$. Let $T/Z_{n}(G)$ be any periodic normal
subgroup of $G/Z_{n}(G)$. Then $T/Z_{n}(G)$ is finite, so that
$\gamma_{n+1}(T)$ is finite (see \cite{R72} Part 1, p.113) and
hence $\gamma_{n+1}(T)=~\{1\}$ because $G$ does not contain
non-trivial periodic normal subgroups; in particular, $T$ is
nilpotent and torsion-free. Since $T/Z(T)$ is torsion-free (see
\cite{R72} Part~1, Theorem~2.25), it follows from
Lemma~\ref{lemmahyper} and by induction on the nilpotent class of
$T$, that $Z(T)$ is contained in $Z_{n}(G)$ and that $T/Z(T)$ is
contained in the hypercentre of $G/Z(T)$. Therefore $T$ is
contained in $\bar {Z}(G)=Z_{n}(G)$ and so $G/Z_{n}(G)$ has no
non-trivial periodic normal subgroups. Let $K/Z_{n}(G)$ be a
polycyclic normal subgroup of finite index of $G/Z_{n}(G)$, and
let $A/Z_{n}(G)$ be the smallest term of the derived series of
$K/Z_{n}(G)$. If $A/Z_{n}(G)$ were not trivial, it would be a
torsion-free abelian normal subgroup of $G/Z_{n}(G)$ and so it
would be contained in the hypercentre of $G/Z_{n}(G)$ by
Lemma~\ref{lemmahyper}. Therefore $A/Z_{n}(G)$ is trivial, so that
$K/Z_{n}(G)$ is trivial and hence $G/Z_{n}(G)$ is finite. It
follows that $\gamma_{n+1}(G)$ is finite (see \cite{R72} Part 1,
p.113).
 Since $G$ has no non-trivial periodic normal subgroups, we obtain that
$\gamma_{n+1}(G)=\{1\}$ and the proof is completed.
\end{proof}

\medskip

\begin{lem}
\label{lemmanil1}Let $G$ be a finitely generated nilpotent $\bar{\Gamma}%
_{k}^{\infty}-$group. If $G$ is torsion-free, then
$\gamma_{k}(G)=\{1\}$.
\end{lem}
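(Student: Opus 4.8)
The plan is to induct on the Hirsch length $h(G)$; when $G$ is abelian (in particular when $h(G)\le 1$) there is nothing to prove, since $k\ge 2$. So suppose $G$ is non-abelian and the result holds for smaller Hirsch length. Choose a \emph{primitive} element $z$ of the (nontrivial, since $G$ is nilpotent) centre $Z(G)$, i.e.\ one whose image belongs to a basis of the free abelian group $Z(G)$; as $G/Z(G)$ is torsion-free (\cite{R72} Part~1, Theorem~2.25), the subgroup $\langle z\rangle$ is isolated, so $G/\langle z\rangle$ is again a finitely generated torsion-free nilpotent $\bar{\Gamma}_{k}^{\infty}-$group of Hirsch length $h(G)-1$. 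By the inductive hypothesis $\gamma_{k}(G/\langle z\rangle)=\{1\}$, i.e.\ $\gamma_{k}(G)\le\langle z\rangle$; and if $Z(G)$ has rank at least $2$ we may pick two primitive elements generating independent infinite cyclic subgroups and deduce $\gamma_{k}(G)=\{1\}$. Hence we may assume $Z(G)=\langle z\rangle\cong\mathbb{Z}$ and $\gamma_{k}(G)\le\langle z\rangle$, and it remains to exclude $\gamma_{k}(G)\ne\{1\}$; in that case $\gamma_{k+1}(G)=[\gamma_{k}(G),G]\le[Z(G),G]=\{1\}$, so $G$ has nilpotency class exactly $k$.

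Assume $\gamma_{k}(G)\ne\{1\}$. Since $G$ is finitely generated, by adjoining generators one at a time we can choose a finitely generated subgroup $N=\langle c_{1},\dots,c_{m}\rangle$ with $\gamma_{k}(N)=\{1\}$ and an element $a\in G$ with $\gamma_{k}(\langle a,N\rangle)\ne\{1\}$ (otherwise $\gamma_{k}$ of every finitely generated subgroup, hence of $G$, would be trivial); in particular $a$ has infinite order. For each $n\ge 1$ put
\[
H_{n}=\langle a^{n},c_{1},\dots,c_{m}\rangle ,
\]
an infinite subgroup of $G$. Since $\langle a,N\rangle$ has class at most $k$, weight-$k$ commutators are multilinear in their entries; writing a weight-$k$ commutator in the generators $a^{n},c_{1},\dots,c_{m}$ of $H_{n}$ in terms of $a,c_{1},\dots,c_{m}$, the ones not involving $a^{\pm n}$ generate $\gamma_{k}(N)=\{1\}$, while one involving $a^{\pm n}$ in $j\ge 1$ places equals, up to sign, the $n^{j}$-th power of a weight-$k$ commutator in $a,c_{1},\dots,c_{m}$. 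Consequently $\gamma_{k}(H_{n})$ is generated by $n$-th powers of elements of $\gamma_{k}(\langle a,N\rangle)\le\langle z\rangle$, so $\{1\}\ne\gamma_{k}(H_{n})\le\langle z^{n}\rangle$ (it is nontrivial because $\gamma_{k}(\langle a,N\rangle)\ne\{1\}$ and $G$ is torsion-free); since a fixed nontrivial subgroup of $\langle z\rangle$ lies in $\langle z^{n}\rangle$ for only finitely many $n$, the subgroups $\gamma_{k}(H_{n})$, $n\ge 1$, take infinitely many distinct values.

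It then suffices to show that infinitely many $H_{n}$ are non-normal in $G$: for such $n$, the $\gamma_{k}(H_{n})$ would be infinitely many distinct members of $\bar{\Gamma}_{k}^{\infty}(G)$, against its finiteness, and the proof would be complete. If instead $H_{n}\triangleleft G$ for all $n$ in an infinite set $I$, then $\bigcap_{n\in I}H_{n}$ is a normal subgroup of $G$; using the inclusions $H_{n'}\le H_{n}$ whenever $n\mid n'$ together with an appropriate choice of $N$ and $a$ (so that no proper power of $a$ is absorbed into $\langle a^{n},N\rangle$, forcing the intersection to be independent of $I$ and to fail to be normal), one reaches a contradiction exactly as in the proof of Lemma~\ref{lemma engel}.

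The step I expect to be the main obstacle is precisely this last one: guaranteeing that $\{H_{n}\}$ consists of non-normal subgroups for all but finitely many $n$. This requires choosing $N$ and $a$ with care; a convenient device is to take $N$ normal (and isolated) in $G$ with $aN$ of infinite order in $(G/N)^{\mathrm{ab}}$ and $aN\notin Z(G/N)$, for then $\langle a^{n}\rangle N/N$, and hence $H_{n}$, is non-normal for every $n\ge 1$, while the conditions $\gamma_{k}(N)=\{1\}$ and $\gamma_{k}(\langle a,N\rangle)\ne\{1\}$ remain available. By contrast, the computation of $\gamma_{k}(H_{n})$ via multilinearity of weight-$k$ commutators and the reduction to $Z(G)\cong\mathbb{Z}$ are routine.
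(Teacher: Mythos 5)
Your reduction to the case $Z(G)=\langle z\rangle\cong\mathbb{Z}$, $\{1\}\neq\gamma_{k}(G)\leq\langle z\rangle$, and your multilinearity computation showing $\{1\}\neq\gamma_{k}(H_{n})\leq\langle z^{n}\rangle$, are correct. But the step you yourself flag as the main obstacle --- producing infinitely many \emph{non-normal} $H_{n}$ --- is a genuine gap, and the device you propose to close it cannot work as stated. You ask for a non-trivial normal \emph{isolated} subgroup $N$ with $\gamma_{k}(N)=\{1\}$ and an element $a$ with $aN\notin Z(G/N)$ and $\gamma_{k}(\langle a,N\rangle)\neq\{1\}$. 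In the situation you have reduced to, any non-trivial normal subgroup of the nilpotent group $G$ meets $Z(G)=\langle z\rangle$ non-trivially, so an isolated such $N$ contains $z$ and hence contains $\gamma_{k}(G)$. For $k=2$ this forces $G/N$ to be abelian, so \emph{no} element $a$ with $aN\notin Z(G/N)$ exists: the device is unavailable in the very first case. For $k\geq3$ its availability is exactly what would have to be proved (presumably using the $\bar{\Gamma}_{k}^{\infty}$ hypothesis), and you give no argument. The fallback "argue as in Lemma~\ref{lemma engel}" also does not close the gap: if all $H_{n}$, $n\in I$, were normal, each would contain $N^{G}$, so $\bigcap_{n\in I}H_{n}$ contains (and typically equals) the normal subgroup $N^{G}$, and its normality contradicts nothing, unlike in Lemma~\ref{lemma engel} where the intersection is the cyclic subgroup $\langle g\rangle$ assumed non-normal.

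For comparison, the paper's proof avoids this difficulty entirely and is much shorter: take a counterexample $G$ minimizing the number $t$ of non-trivial subgroups in $\bar{\Gamma}_{k}^{\infty}(G)$; note $t>0$ by Lemma~\ref{lemma torsionfree}; pick a non-normal $H$ with $1\neq x\in\gamma_{k}(H)$ and, by residual finiteness of $G$, a finite-index subgroup $L$ with $x\notin L$. Every non-normal subgroup of $L$ is non-normal in $G$ and its $\gamma_{k}$ avoids $x$, so $\bar{\Gamma}_{k}^{\infty}(L)$ has fewer than $t$ non-trivial members, whence $\gamma_{k}(L)=\{1\}$ by minimality, and Lemma~\ref{lntf} (finite index, torsion-free, locally nilpotent) gives $\gamma_{k}(G)=\{1\}$. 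If you want to salvage your line of argument, you would need a proof that a counterexample always contains a Heisenberg-like configuration with the $H_{n}$ non-normal; as written, that key existence claim is missing and, for $k=2$, your proposed way of obtaining it is impossible.
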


\begin{proof}
By way of contradiction, assume that the statement is false and
among all the
counterexamples choose $G$ in such a way that the set $\bar{\Gamma}%
_{k}^{\infty}(G)$ contains the smallest number $t$ of non-trivial
subgroups. Then $t>0$ by Lemma \ref{lemma torsionfree} and hence
there exists a non-normal subgroup $H$ of $G$ such that
$\gamma_{k}(H)\ $contains a non-trivial element $x$. Since $G$ is
residually finite, there exists a subgroup of finite index $L$ of
$G$ such that $x\notin L.$ If $X$ is any non-normal subgroup of
$L$, then $X$ is not normal in $G$ and $\gamma
_{k}(X)\neq\gamma_{k}(H)$, so that the set
$\bar{\Gamma}_{k}^{\infty}(L)$ contains less than $t$ non-trivial
subgroups and hence $\gamma_{k}(L)=\{1\}$. Therefore Lemma
\ref{lntf} yields that also $\gamma_{k}(G)=\{1\}$ and this
contradiction completes the proof.
\end{proof}

\medskip

\begin{lem}
\label{lemmaperiodic}Let $G$ be a locally graded $\bar{\Gamma}_{k}^{\infty}%
-$group. Then $\gamma_{k}(G)$ is periodic.
\end{lem}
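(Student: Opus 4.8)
The plan is to reduce the statement to finitely generated groups and then feed everything into the machinery built earlier in this section. The first observation is that the property $\bar{\Gamma}_{k}^{\infty}$ is inherited by subgroups: if $F\leq G$ and $X$ is an infinite non-normal subgroup of $F$, then $X$ cannot be normal in $G$ (a subgroup of $G$ contained in $F$ and normal in $G$ would be normal in $F$), so $\gamma_{k}(X)\in\bar{\Gamma}_{k}^{\infty}(G)$; hence $\bar{\Gamma}_{k}^{\infty}(F)\subseteq\bar{\Gamma}_{k}^{\infty}(G)$ is finite. The same remark, together with $\gamma_{k}(Y/M)=\gamma_{k}(Y)M/M$, shows that the property also passes to homomorphic images. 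Since every element of $\gamma_{k}(G)$ already lies in $\gamma_{k}(F)$ for some finitely generated subgroup $F$ of $G$, and a union of finite groups is periodic, it will be enough to prove that $\gamma_{k}(F)$ is finite for every finitely generated locally graded $\bar{\Gamma}_{k}^{\infty}-$group $F$. So I would assume from the outset that $G$ is finitely generated.

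By Lemma \ref{Lemma SF} and Lemma \ref{lmax} such a $G$ is soluble-by-finite and satisfies the maximal condition, hence it is polycyclic-by-finite; in particular it is noetherian and each of its periodic normal subgroups is finite, so $G$ possesses a largest finite normal subgroup $T$. Passing to $G/T$, every periodic normal subgroup of $G/T$ has periodic (hence finite) preimage in $G$, which must therefore be contained in $T$; thus $G/T$ has no non-trivial periodic normal subgroups, while it is still a finitely generated locally graded $\bar{\Gamma}_{k}^{\infty}-$group. Now Lemma \ref{lemmanil} shows that $G/T$ is nilpotent, and a finitely generated nilpotent group has finite torsion subgroup, which is normal and hence trivial in $G/T$, so $G/T$ is torsion-free. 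Lemma \ref{lemmanil1} then gives $\gamma_{k}(G/T)=\{1\}$, that is, $\gamma_{k}(G)\leq T$, which is finite. This settles the finitely generated case and hence the lemma.

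The computations involved are all routine; the one point deserving care, and the conceptual heart of the reduction, is the closure of the class of $\bar{\Gamma}_{k}^{\infty}-$groups under subgroups and quotients, which is what makes it legitimate to replace $G$ first by an arbitrary finitely generated subgroup and then by $G/T$. (There is no analogous closure under passage to subgroups of finite index, which is why one cannot simply quote a result about polycyclic groups here.) Everything after the reduction is a direct assembly of Lemmas \ref{Lemma SF}, \ref{lmax}, \ref{lemmanil} and \ref{lemmanil1}.
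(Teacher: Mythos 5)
Your proof is correct, and it uses the same key ingredients as the paper (Lemmas \ref{Lemma SF}, \ref{lmax}, \ref{lemmanil} and \ref{lemmanil1}), but organizes the reduction differently. The paper works globally: it first applies Lemma \ref{lemmanil} to two-generator subgroups $\langle x,y\rangle$ modulo their largest periodic normal subgroup to show that the torsion elements of $G$ form a subgroup $T$, and then uses Lemmas \ref{lemmanil} and \ref{lemmanil1} to see that $G/T$ is (locally, hence globally) nilpotent of class at most $k-1$, so $\gamma_{k}(G)\leq T$. You instead localize at the outset: after checking that the classes of locally graded groups and of $\bar{\Gamma}_{k}^{\infty}$-groups are subgroup- and quotient-closed (your verifications are fine, and for $G/T$ the locally graded condition is automatic since that quotient is polycyclic-by-finite), you observe that every element of $\gamma_{k}(G)$ lies in $\gamma_{k}(F)$ for some finitely generated $F$, and then prove the sharper statement that $\gamma_{k}(F)$ is finite for every finitely generated such group, by factoring out the largest finite normal subgroup of the polycyclic-by-finite group $F$ and applying Lemmas \ref{lemmanil} and \ref{lemmanil1}. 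Your route avoids having to prove that the torsion elements of $G$ form a subgroup, while the paper's route yields that structural fact (and that $G/T$ is nilpotent of class at most $k-1$) as a by-product; both are complete. The only blemish is your closing parenthetical: you have already shown closure under arbitrary subgroups, which of course includes subgroups of finite index, so the remark as stated is misleading (presumably you meant that the class is not visibly closed under finite extensions); it plays no role in the argument.
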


\begin{proof}
The group $G$ is soluble-by-finite by Lemma \ref{Lemma SF} and it
locally satisfies the maximal condition by Lemma \ref{lmax}. Let
$x$ and $y$ be elements of finite order of $G$ and let $X$ be the
largest periodic normal subgroup of $\left\langle x,y\right\rangle
$. Application of Lemma~\ref{lemmanil} yields that the factor
group $\left\langle x,y\right\rangle/X$ must be trivial, so that
$\left\langle x,y\right\rangle=X$ is finite. It follows that the
set $T$ of all elements of finite order of $G$ is a (normal)
subgroup. By Lemma \ref{lemmanil} and Lemma \ref{lemmanil1} each
finitely generated subgroup of $G/T$ is nilpotent of class at most
$k-1$, so that $G/T$ is likewise nilpotent of class at most $k-1$.
Thus $\gamma_{k}(G)$ is contained in $T$ and hence $\gamma_{k}(G)$
is periodic.
\end{proof}

\medskip We are now able to prove our first main result.

\medskip

\begin{proof}
[Proof of Theorem A]Clearly it can be assumed that $\gamma_{k}(G)$
is not trivial and hence $G$ contains a finitely generated
subgroup $E$ such that $\gamma_{k}(E)\neq\{1\}$; moreover, by
Theorem \ref{teorema3} it can be assumed that the set
$\bar{\Gamma}_{k}^{\infty}(G)$ contains $t\geq1$ non-trivial
subgroups. The group $G$ is soluble-by-finite by Lemma \ref{Lemma
SF} and it locally satisfies the maximal condition by Lemma
\ref{lmax}, moreover $\gamma_{k}(G)$ is periodic by Lemma
\ref{lemmaperiodic}. Suppose that $E$ is contained in a finitely
generated non-normal subgroup $F$
of $G$. Then the finite subgroup $\gamma_{k}(F)$ belongs to $\bar{\Gamma}%
_{k}(G)$ so that, since $\bar{\Gamma}_{k}(G)$ is finite,
$\gamma_{k}(F)$ has finitely many conjugates and hence Dietzmann's
lemma (see \cite{R72} Part 1, p.45) yields that the normal closure
$N$ of $\gamma_{k}(F)$ in $G$ is finite. On the other hand, by
induction on $t$ it follows that the subgroup $\gamma _{k}(G/N)$
is finite, and hence also $\gamma_{k}(G)$ is finite. Assume now
that every finitely generated subgroup containing $E$ is normal.
Then $E$ is normal and $G/E$ is a Dedekind group, so that
$G^{\prime }E/E$ is finite and hence $G^{\prime}$ is finitely
generated. Then $G^{\prime }$ is polycyclic-by-finite and hence
its periodic subgroup $\gamma_{k}(G)$ is finite.
\end{proof}

\medskip

\begin{lem}
\label{dir}Let $G$ be a group and let $A$ be an abelian normal
subgroup of finite index of $G$. If $A$ is the direct product of
infinitely many subgroups of prime order, then there exists a
collection $(B_{n})_{n\in\mathbb{N}}$ of finite $G$-invariant
subgroups of $A$ such that
$\displaystyle\left\langle B_{n}~|~n\in\mathbb{N}\right\rangle =\underset{n\in\mathbb{N}%
}{\text{\emph{Dr}}}B_{n}$.

\end{lem}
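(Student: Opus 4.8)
The plan is to construct the subgroups $B_{n}$ recursively, one at a time, using two elementary properties of $A$. First, since $A$ is abelian and has finite index in $G$, the centraliser $C=C_{G}(A)$ contains $A$ and hence has finite index in $G$; thus $G$ induces only finitely many automorphisms on $A$. In particular, for any subgroup $U$ of $A$ the conjugate $U^{g}$ depends only on the coset $gC$ (because an element of $C$ centralises $U\leq A$), so the set $\{U^{g}\,|\,g\in G\}$ is finite; and for any $u\in A$ the subgroup $\langle u^{G}\rangle$ is generated by the finite set $\{u^{g}\,|\,g\in G\}$, so that — $A$ being periodic — it is a \emph{finite} $G$-invariant subgroup. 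Second, since $A$ is a direct product of groups of prime order, each of its primary components is elementary abelian, hence a vector space over the relevant prime field, and it follows from the primary decomposition that every subgroup of $A$ is a direct factor of $A$. These two observations are the whole of the input.

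First I would start the recursion with any non-trivial $x\in A$ and put $B_{1}=\langle x^{G}\rangle$. Suppose that finite $G$-invariant subgroups $B_{1},\dots,B_{n}$ of $A$ have been found with $C_{n}:=\langle B_{1},\dots,B_{n}\rangle=B_{1}\times\cdots\times B_{n}$. By the second observation, write $A=C_{n}\times U$ for some $U\leq A$. Then $|A:U|=|C_{n}|$ is finite, and since the finitely many conjugates $U^{g}$ all have this same index, the subgroup $U_{0}:=\bigcap_{g\in G}U^{g}$ is $G$-invariant of finite index in $A$; as $A$ is infinite, $U_{0}\neq\{1\}$. Choosing a non-trivial $u\in U_{0}$ and setting $B_{n+1}=\langle u^{G}\rangle$, the first observation gives that $B_{n+1}$ is a finite $G$-invariant subgroup, and $B_{n+1}\leq U_{0}$ because $U_{0}$ is $G$-invariant; hence $B_{n+1}\cap C_{n}\leq U_{0}\cap C_{n}\leq U\cap C_{n}=\{1\}$, so that $C_{n+1}:=\langle B_{1},\dots,B_{n+1}\rangle=C_{n}\times B_{n+1}=B_{1}\times\cdots\times B_{n+1}$. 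The recursion thus continues indefinitely and yields finite $G$-invariant subgroups $(B_{n})_{n\in\mathbb{N}}$ with $\langle B_{n}\,|\,n\in\mathbb{N}\rangle=\bigcup_{n}C_{n}=\text{Dr}_{n\in\mathbb{N}}\,B_{n}$, since any product $b_{1}\cdots b_{r}$ with $b_{i}\in B_{i}$ lies in $C_{r}=B_{1}\times\cdots\times B_{r}$.

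As for obstacles, there is really only one point to watch: the splitting $A=C_{n}\times U$, that is, the fact that a finite subgroup of $A$ is always a direct factor. This is exactly where the hypothesis that $A$ be a direct product of subgroups of prime order is used, and it cannot be dispensed with — for an abelian normal subgroup of finite index lacking this property the conclusion can fail (take $A$ a Pr\"ufer group, in which any two non-trivial subgroups meet non-trivially). Everything else — the finiteness of $\{U^{g}\,|\,g\in G\}$, the finiteness of $\langle u^{G}\rangle$, and the direct-product bookkeeping — is routine.
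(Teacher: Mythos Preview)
Your proof is correct and follows the same recursive strategy as the paper: start with $B_{1}=\langle a_{1}\rangle^{G}$, and at each step produce a finite $G$-invariant subgroup $B_{n+1}=\langle a_{n+1}\rangle^{G}$ meeting $C_{n}=B_{1}\times\cdots\times B_{n}$ trivially. The only difference is in how the element $a_{n+1}$ is located. You split $A=C_{n}\times U$ using the fact that every subgroup of $A$ is a direct factor, and then intersect the finitely many conjugates of $U$ to get a $G$-invariant finite-index subgroup $U_{0}$ disjoint from $C_{n}$. The paper instead observes that $G$ itself is residually finite (since $A$ is residually finite and of finite index), chooses a normal subgroup $N$ of finite index in $G$ with $N\cap C_{n}=\{1\}$, and picks $a_{n+1}\in N\cap A$. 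Your route is arguably more self-contained, staying entirely inside $A$ and using the structural hypothesis on $A$ directly; the paper's route avoids the complement argument but needs the (easy) extra step that residual finiteness passes up through finite index. Either way the mechanism is the same, and your remark that the direct-factor property is the one place the hypothesis on $A$ is genuinely used is accurate for your version of the argument.
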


\begin{proof}
Clearly each subgroup of $A$ has finitely many conjugates in $G$
so that, if $a_{1}$ is any non-trivial element of $A$, the
subgroup $B_{1}=\left\langle a_{1}\right\rangle ^{G}$ is a finite
$G$-invariant subgroup of $A$. Suppose, by induction, that finite
$G$-invariant subgroups $B_{1},\ldots,B_{n}$ of $A$
have been chosen in such a way that%
\[
\left\langle B_{1},\ldots,B_{n}\right\rangle
=B_{1}\times\cdots\times
B_{n}\text{.}%
\]
Since $\left\langle B_{1},\ldots,B_{n}\right\rangle $ is finite
and the group $G$ is residually finite, there exists a normal
subgroup of finite index $N$ of $G$ such that
\[
N\cap\left\langle B_{1},\ldots,B_{n}\right\rangle =\{1\}.
\]
Then $N\cap A$ has finite index in $A$ and hence $N\cap A$
contains a non-trivial element $a_{n+1}$. Thus
$B_{n+1}=\left\langle a_{n+1}\right\rangle ^{G}$ is a finite
$G$-invariant subgroup of $N\cap A$ and
\[
\left\langle B_{1},\ldots,B_{n},B_{n+1}\right\rangle
=B_{1}\times\cdots\times
B_{n}\times B_{n+1}\text{,}%
\]
so that the lemma is proved.
\end{proof}

\medskip

\begin{lem}
\label{periodic}Let $G$ be a locally graded periodic $\bar{\Gamma}_{k}%
^{\infty}-$group. Then either $G$ is a \v{C}ernikov group or
$\gamma_{k}(G)$ is finite.
\end{lem}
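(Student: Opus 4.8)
The plan is to reduce the non-\v{C}ernikov case to Theorem A. Assume that $G$ is not a \v{C}ernikov group; it then suffices to show that $\bar{\Gamma}_{k}(G)$ is finite, for $\gamma_{k}(G)$ will be finite by Theorem A. Since $\bar{\Gamma}_{k}^{\infty}(G)$ is finite by hypothesis, and since a (periodic) group all of whose finite subgroups are normal is a Dedekind group and so has finite $\gamma_{k}$, the heart of the matter is the claim: \emph{if $X$ is a finite non-normal subgroup of $G$, then $\gamma_{k}(X)\in\bar{\Gamma}_{k}^{\infty}(G)$}. This will be achieved by embedding $X$ into a suitable infinite non-normal subgroup $Y$ with $\gamma_{k}(Y)=\gamma_{k}(X)$.

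To produce such a $Y$, recall that $G$ is soluble-by-finite and periodic by Lemma \ref{Lemma SF}; since it is not \v{C}ernikov, the theorem of Zai\v{c}ev invoked in the proof of Lemma \ref{lemmino} (see \cite{Z74}) furnishes an $X$-invariant abelian subgroup of $G$ not satisfying the minimal condition on subgroups. Inside a primary component of infinite rank, or across infinitely many primary components, this subgroup contains an infinite elementary abelian section; closing it under the finite group $X$ and cutting down by a cofinite $X$-invariant subgroup, one obtains an infinite $X$-invariant subgroup $D$ of $G$ that is a restricted direct product of subgroups of prime order and satisfies $D\cap X=\{1\}$. Put $W=\langle X,D\rangle$, so that $D$ is an abelian normal subgroup of finite index of $W$; Lemma \ref{dir} now yields finite $W$-invariant (hence $X$-invariant) subgroups $B_{n}$ ($n\in\mathbb{N}$) of $D$ with $\langle B_{n}\mid n\in\mathbb{N}\rangle=\mathrm{Dr}_{n\in\mathbb{N}}B_{n}$.

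Everything then turns on the way $X$ acts on the $B_{n}$. For $S\subseteq\mathbb{N}$ write $C_{S}=\mathrm{Dr}_{n\in S}B_{n}$ and $Y_{S}=XC_{S}$; since $C_{S}$ is an abelian normal subgroup of $Y_{S}$ with $X\cap C_{S}=\{1\}$, a standard commutator computation gives $\gamma_{k}(Y_{S})=\gamma_{k}(X)\,[C_{S},{}_{k-1}X]=\gamma_{k}(X)\cdot\mathrm{Dr}_{n\in S}[B_{n},{}_{k-1}X]$. Suppose first that $[B_{n},{}_{k-1}X]=\{1\}$ for infinitely many $n$; discarding the remaining indices at the outset we may assume this holds for every $n$, so that $\gamma_{k}(Y_{S})=\gamma_{k}(X)$ for all $S$. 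It remains to find $S$ with $Y_{S}$ non-normal: splitting $\mathbb{N}$ into infinitely many pairwise disjoint infinite subsets $S_{j}$ one checks that $\bigcap_{j}Y_{S_{j}}=X$, so not all $Y_{S_{j}}$ can be normal in $G$ (else $X$ would be), and any non-normal $Y=Y_{S_{j}}$ yields $\gamma_{k}(X)=\gamma_{k}(Y)\in\bar{\Gamma}_{k}^{\infty}(G)$.

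The main obstacle is to exclude the opposite alternative, namely $[B_{n},{}_{k-1}X]\neq\{1\}$ (hence also $[B_{n},X]\neq\{1\}$) for all but finitely many $n$. Here I would derive a contradiction with the finiteness of $\bar{\Gamma}_{k}^{\infty}(G)$: letting $T$ be the infinite set of such indices and decomposing $T$ into infinitely many pairwise disjoint infinite subsets $T_{j}$, the same ``partition into infinitely many infinite pieces'' argument produces, for each $j$, an infinite non-normal subgroup of the form $Y^{(j)}=XC_{S_{j}}$ with $S_{j}\subseteq T_{j}$, for which $\gamma_{k}(Y^{(j)})=\gamma_{k}(X)\cdot\mathrm{Dr}_{n\in S_{j}}[B_{n},{}_{k-1}X]$ with the displayed direct factor non-trivial and contained in $\mathrm{Dr}_{n\in T_{j}}B_{n}$. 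Since the $T_{j}$ are pairwise disjoint and $\gamma_{k}(X)\cap D=\{1\}$, the subgroups $\gamma_{k}(Y^{(j)})$ are pairwise distinct, so $\bar{\Gamma}_{k}^{\infty}(G)$ would be infinite --- a contradiction. This rules out the bad alternative, establishes the claim, and hence --- via Theorem A --- the finiteness of $\gamma_{k}(G)$; the \v{C}ernikov possibility is already part of the conclusion.
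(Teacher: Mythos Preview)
Your argument is correct, and it differs from the paper's in an interesting way. Both proofs share the same opening moves (Zai\v{c}ev's theorem to obtain an $X$-invariant abelian subgroup without min, passage to an elementary abelian $D$ with $D\cap X=\{1\}$, and Lemma~\ref{dir} to split $D$ into finite $X$-invariant blocks $B_n$), as well as the ``intersection of two disjoint halves equals $X$'' trick to force non-normality. The divergence is in how the finiteness of $\bar{\Gamma}_k^{\infty}(G)$ is exploited. The paper argues by a minimal counterexample on the number $t$ of non-trivial elements of $\bar{\Gamma}_k^{\infty}(G)$: an iterated bisection produces $t+1$ infinite non-normal subgroups $HL_1,\ldots,HL_{t+1}$ with pairwise intersection $H$; pigeonhole gives $\gamma_k(HL_\ell)=\gamma_k(HL_m)\leq H$, hence a \emph{finite} member of $\bar{\Gamma}_k^{\infty}(G)$; Dietzmann's lemma then yields a finite normal subgroup killing one entry of $\bar{\Gamma}_k^{\infty}$, and induction on $t$ finishes. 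Your route avoids the induction and Dietzmann entirely: using the identity $\gamma_k(XC_S)=\gamma_k(X)\,[C_S,{}_{k-1}X]$ for an abelian normal complement $C_S$ (valid by the three-subgroup lemma), a dichotomy on the action of $X$ on the $B_n$ either produces an infinite non-normal $Y$ with $\gamma_k(Y)=\gamma_k(X)$ directly, or manufactures infinitely many distinct members of $\bar{\Gamma}_k^{\infty}(G)$, which is impossible. This gives the sharper conclusion $\bar{\Gamma}_k(G)\subseteq\bar{\Gamma}_k^{\infty}(G)$ before invoking Theorem~A, at the cost of the extra commutator computation; the paper's pigeonhole argument is softer but needs the inductive machinery.
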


\begin{proof}
Assume that the statement is false and let $G$ be a counterexample
in which the set $\bar{\Gamma}_{k}(G)$ has the smallest number $t$
of non-trivial subgroups. Then $t\geq1$ by Corollary
\ref{cor3bis}; moreover, Theorem A allows us to suppose that there
exists a finite non-normal subgroup $H$ of $G$ such that
$\gamma_{k}(H)\neq\{1\}$. Since the group $G$ is soluble-by-finite
by Lemma \ref{Lemma SF} and it is not a \v{C}ernikov group, $G$
contains an abelian subgroup $A$ such that $A^{H}=A$ and $A$ does
not satisfy the minimal condition on subgroups (see \cite{Z74}).
Then the socle of $A$ is infinite and hence, by replacing $A$ by
its socle, we may suppose that $A$ is the direct product of
infinitely many cyclic groups of prime order. Put $K=AH$. Since
$H$ is finite, also $A\cap H$ is finite and then, by replacing $A$
by a suitable $K$-invariant subgroup of finite index, we may
further suppose that $A\cap H=\{1\}$. Application of Lemma
\ref{dir} yields that there exists a collection
$(B_{n})_{n\in\mathbb{N}}$ of finite $K$-invariant
subgroups of $A$ such that%
\[
\left\langle B_{n}~|~n\in\mathbb{N}\right\rangle =\underset{n\in\mathbb{N}%
}{\text{Dr}}B_{n}.
\]
Let%
\[
\hat{X}_{1}=\underset{n\in\mathbb{N}}{\text{Dr}}B_{2n}\text{ \ \ \
\ and
\ \ \ \ }\check{X}_{1}=\underset{n\in\mathbb{N}}{\text{Dr}}B_{2n+1}\text{.}%
\]
Then $\hat{X}_{1}$ and $\check{X}_{1}$ are infinite normal
subgroups of $K$ and $\hat{X}_{1}\cap\check{X}_{1}=\{1\}$; so that
$H=H\hat{X}_{1}\cap H\check{X}_{1}$. Since $H$ is not normal in
$G$, there exists a subgroup $L_{1}$ in
$\{\hat{X}_{1},\check{X}_{1}\}$ such that $HL_{1}$ is not normal
in $G$.

Consider now
$C\in\{\hat{X}_{1},\check{X}_{1}\}\smallsetminus\{L_{1}\}$;
clearly $C$ can be written as
\[
C=\underset{n\in\mathbb{N}}{\text{Dr}}C_{n}%
\]
where each $C_{n}$ is a finite $K$-invariant subgroup of $A$ and
$\left\langle
C,L_{1}\right\rangle =C\times L_{1}$. Let%
\[
\hat{X}_{2}=\underset{n\in\mathbb{N}}{\text{Dr}}C_{2n}\text{ \ \ \
\ and
\ \ \ }\check{X}_{2}=\underset{n\in\mathbb{N}}{\text{Dr}}C_{2n+1}\text{.}%
\]
Then $\hat{X}_{2}$ and $\check{X}_{2}$ are infinite normal
subgroups of $K$, $\hat{X}_{2}\cap\check{X}_{2}=\{1\}$ and
$H=H\hat{X}_{2}\cap H\check{X}_{2}$. Since $H$ in not normal in
$G$, there exists an element $L_{2}$ of $\{\hat {X}_{2},$
$\check{X}_{2}\}$ such that $HL_{2}$ is not normal in $G$.
Iterating this argument it is clear that $t+1$ infinite
$K$-invariant subgroups $L_{1},\ldots,L_{t},L_{t+1}$ can be chosen
in such a way that $L_{i}\cap L_{j}=\{1\}$, $HL_{i}\cap HL_{j}=H$
and $HL_{i}\ $is not normal in $G$ for each
$i,j\in\{1,\ldots,t+1\}$ with $i\neq j$. Since $\gamma_{k}(H)$ is
not trivial, each $\gamma_{k}(HL_{i})$ is likewise non-trivial and
hence,
since there are only $t$ non-trivial subgroups in the set $\bar{\Gamma}%
_{k}^{\infty}(G)$, there exist $\ell,m\in\{1,\ldots,t+1\}$, with
$\ell\neq m$, such that
$\gamma_{k}(HL_{\ell})=\gamma_{k}(HL_{m})$. But $HL_{\ell}\cap
HL_{m}=H$, so that $\gamma_{k}(HL_{\ell})=\gamma_{k}(HL_{m})\leq
H$ and hence $\gamma_{k}(HL_{\ell})$ is a finite subgroup which
belongs to $\bar{\Gamma}_{k}^{\infty}(G).$ Since the set
$\bar{\Gamma}_{k}^{\infty}(G)$ is finite, the finite subgroup
$\gamma_{k}(HL_{\ell})$ has finitely many conjugates and hence
Dietzmann's lemma (see \cite{R72} Part 1, p.45) yields that the
normal closure $N$ of $\gamma_{k}(HL_{\ell})$ in $G$ is finite.
Since $\bar{\Gamma}_{k}^{\infty}(G/N)$ contains less than\ $t$
non-trivial subgroups, it follows\ from the minimal choice of $t$
that $\gamma_{k}(G/N)$ is finite. Thus $\gamma_{k}(G)$ is finite
and this contradiction concludes the proof.
\end{proof}

\medskip

\begin{lem}
\label{cernikov}Let $G$ be a \v{C}ernikov
$\bar{\Gamma}_{k}^{\infty}-$group. If $\gamma_{k}(G)$ is infinite,
then $G$ is irreducible.
\end{lem}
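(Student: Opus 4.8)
The plan is to argue by contradiction. Assume $G$ is not irreducible, and let $D$ be the largest divisible abelian subgroup of $G$, so that $D$ is a direct product of finitely many Pr\"ufer groups and $|G:D|$ is finite. If $D$ is central in $G$, then $|G:Z(G)|$ is finite, so $G^{\prime}$ is finite (see \cite{R72} Part~1, Theorem~4.12) and hence $\gamma_{k}(G)$ is finite, contrary to hypothesis. Therefore $D$ is not central; since irreducibility fails, there are a subgroup $K$ with $C_{G}(D)<K\le G$ and an infinite proper $K$-invariant subgroup of $D$, and replacing $K$ by $\langle C_{G}(D),x\rangle$ for a suitable $x\in G\setminus C_{G}(D)$ — which does not enlarge such an invariant subgroup, subgroups of $D$ being automatically $C_{G}(D)$-invariant — we obtain an element $x\notin C_{G}(D)$ together with an infinite proper $\langle x\rangle$-invariant subgroup $M$ of $D$; passing to its divisible part we may take $M$ divisible. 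The aim from here is to exhibit infinitely many infinite non-normal subgroups of $G$ with pairwise distinct $k$th terms of the lower central series, contradicting the finiteness of $\bar{\Gamma}_{k}^{\infty}(G)$.

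Next I would collect the facts needed about $D$ as a module over the finite cyclic group induced by $x$. Since $x$ induces a finite-order automorphism of the divisible abelian group $D$, one has $D=C_{D}(x)[D,x]$ with $C_{D}(x)\cap[D,x]$ finite; the descending chain $[D,{}_{1}x]\ge[D,{}_{2}x]\ge\cdots$ stabilises by the minimal condition; each $[D,{}_{j}x]$ is divisible, being the image of $D$ under an endomorphism; and $[D,x]$, hence also $[D,{}_{k-1}x]$, is infinite — indeed $x$ is non-trivial on some primary component $D_{p}$, where, dualising to the $\mathbb{Z}_{p}$-lattice $D_{p}^{\vee}$, the operator $x-1$ is semisimple and so keeps positive rank under iteration. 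In particular, if $C_{D}(x)$ is finite then $[D,x]$ is a divisible subgroup of finite index of $D$, whence $[D,x]=D$.

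I would then choose an infinite $\langle x\rangle$-invariant divisible subgroup $L$ of $D$ with $[D,x]\not\le L$ and with $[L,{}_{k-1}x]$ properly contained in $[D,{}_{k-1}x]$, as follows: if $C_{D}(x)$ is infinite take $L$ to be its divisible part, so that $[L,x]=1$ and $[D,x]\not\le L$ because $[D,x]$ is infinite while $[D,x]\cap C_{D}(x)$ is finite; if $C_{D}(x)$ is finite take $L=M$, using that then $[D,x]=D\not\le M$ and that $[M,{}_{k-1}x]$ has rank at most that of $M$, which is smaller than the rank of $D$. For $n\ge1$ put $\Omega_{n}=\{d\in D~|~d^{n!}=1\}$, a finite characteristic subgroup of $D$ with $\Omega_{1}\le\Omega_{2}\le\cdots$ and $\bigcup_{n}\Omega_{n}=D$, and set $W_{n}=L\Omega_{n}$ and $H_{n}=W_{n}\langle x\rangle$. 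Then $H_{n}$ is a subgroup of $G$ containing the infinite group $L$, so $H_{n}$ is infinite; the divisible part of $W_{n}$ is $L$ (because $W_{n}/L$ is finite), so $[D,x]\not\le W_{n}$, and since the conjugates of $x$ by elements of $D$ fill the coset $[D,x]x$, the subgroup $H_{n}$ is not normalised by $D$ and hence $H_{n}\ntriangleleft G$. A routine induction gives $\gamma_{k}(H_{n})=[W_{n},{}_{k-1}x]=[L,{}_{k-1}x][\Omega_{n},{}_{k-1}x]$, whose divisible part is $[L,{}_{k-1}x]$. As $n$ varies these subgroups form an increasing chain whose union contains $[D,{}_{k-1}x]$; since $[D,{}_{k-1}x]$ is divisible and is not contained in $[L,{}_{k-1}x]$, it lies in no single $\gamma_{k}(H_{n})$, so the chain does not stabilise and the $\gamma_{k}(H_{n})$ take infinitely many distinct values. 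As each $H_{n}$ is an infinite non-normal subgroup of $G$, this contradicts the finiteness of $\bar{\Gamma}_{k}^{\infty}(G)$, completing the proof.

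I expect the main obstacle to be the module-theoretic input of the second paragraph — the facts that $D=C_{D}(x)[D,x]$, that $C_{D}(x)\cap[D,x]$ is finite, and above all that $[D,{}_{k-1}x]$ is infinite — since these are precisely what makes the two choices of $L$ exhaustive and forces the subgroups $\gamma_{k}(H_{n})$ to be genuinely different. By contrast, the commutator identity $\gamma_{k}(H_{n})=[W_{n},{}_{k-1}x]$, the non-normality of the $H_{n}$, and the reductions in the first paragraph are all straightforward.
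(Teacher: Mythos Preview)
Your argument is correct, but it takes a genuinely different route from the paper's. The paper never constructs explicit non-normal subgroups. Instead, having fixed a subgroup $K$ with $C_G(D)<K$ and an infinite proper $K$-invariant subgroup $A\le D$, it applies Theorem~A to the quotient $K/A$ to obtain that $K'A/A$ is finite, whence $[D,K]\le A<D$; a result of Lennox and Robinson then gives that $C_D(K)$, and hence $Z(K)$, is infinite, so Theorem~A applied to $K/Z(K)$ yields $[D,K]\le Z(K)$, forcing $D\le Z_2(K)$ and eventually $[D,K]=\{1\}$, a contradiction. Your proof, by contrast, reduces to a single element $x\notin C_G(D)$, uses the semisimplicity of the induced automorphism over $\mathbb{Q}_p$ to extract the key facts $[D,x]=[D,{}_2x]$, $|[D,x]\cap C_D(x)|<\infty$, and $[D,{}_{k-1}x]$ infinite, and then exhibits an explicit strictly increasing chain of subgroups $\gamma_k(H_n)$ with each $H_n$ infinite and non-normal. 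The paper's approach is shorter and softer once Theorem~A is in hand and avoids any linear algebra over $\mathbb{Z}_p$; your approach is independent of Theorem~A, entirely self-contained modulo the module-theoretic facts you flag, and makes the violation of the $\bar\Gamma_k^\infty$-condition completely concrete.
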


\begin{proof}
Let $D$ be the largest divisible subgroup of $G$, and let $K$ be a
subgroup of $G$ containing $C_{G}(D)$. Assume that $D$ contains an
infinite proper $K$-invariant subgroup $A$. Then the set
$\bar{\Gamma}_{k}(K/A)$ is finite and so Theorem A yields that the
factor group $\gamma_{k}(K)A/A$ is finite. On the other hand,
nilpotent groups satisfying the minimal condition are
finite-by-abelian (see \cite{R72} Part 1, Theorem 3.14 and Theorem
4.12), so that the derived subgroup of $K/\gamma_{k}(K)A$ is
finite and hence $K^{\prime}A/A$ is likewise finite. As the
subgroup $[D,K]$ is divisible, it follows that $[D,K]\leq A<D$ and
hence $C_{D}(K)$ is infinite (see \cite{LR80}, Theorem G). In
particular, $Z(K)$ is infinite and Theorem A yields that
$\gamma_{k}(K)Z(K)/Z(K)$ is finite. Since the nilpotent group
$K/\gamma_{k}(K)Z(K)$ is finite-by-abelian, it follows that the
factor group $K^{\prime}Z(K)/Z(K)$ is finite. Then $[D,K]$ is
contained in $Z(K)$, so that $[D,K,K]=\{1\}$ and hence $D\leq
Z_{2}(K)$. Therefore $K/Z_{2}(K)$ is finite, so that $K$ is
finite-by-nilpotent and so even finite-by-abelian. It follows that
$[D,K]=\{1\}$, so that $K=C_{G}(D)$ and $G$ is an irreducible
\v{C}ernikov group.
\end{proof}

\medskip

\begin{proof}
[Proof of Theorem B]Let $G$ be a counterexample with smallest
number $t$ of non-trivial subgroups which belongs to
$\bar{\Gamma}_{k}^{\infty}(G)$. Then $t\geq1$ by
Corollary~\ref{cor3bis}, while Lemma \ref{periodic} and Lemma
\ref{cernikov} yield that $G$ is not periodic. The group $G$ is
soluble-by-finite by Lemma \ref{Lemma SF}, it locally satisfies
the maximal condition by Lemma \ref{lmax} and $\gamma_{k}(G)$ is
periodic by Lemma \ref{lemmaperiodic}. Since $\gamma_{k}(G)$ is
not trivial, there exists a finitely generated subgroup $E$ of $G$
such that $\gamma_{k}(E)\neq~\{1\}$ and, since $G$ is not
periodic, it can be assumed that $E$ contains some element of
infinite order. In particular, $E$ is an infinite
polycyclic-by-finite group and $\gamma_{k}(E)$ is finite. Assume
that $E$ is not normal in $G$, so that $\gamma_{k}(E)$ belongs to
$\bar{\Gamma}_{k}^{\infty}(G)$ and hence $\gamma_{k}(E)$ has
finitely many conjugates. Since $\gamma_{k}(E)$ is finite, it
follows from Dietzmann's lemma (see \cite{R72} Part 1, p.~45) that
the normal closure $N$ of $\gamma_{k}(E)$ in $G$ is finite. Since
the number of non-trivial subgroups in the set
$\bar{\Gamma}_{k}^{\infty}(G/N)$ is less than $t$, by the minimal
choice of $t$ it follows that $\gamma_{k}(G/N)$ finite. Therefore
$\gamma_{k}(G)$ is likewise finite and this contradiction proves
that $E$ must be a normal subgroup of $G$. Then the factor $G/E$
is a $\bar{\Gamma}_{k}-$group and so application of Theorem A
yields that $\gamma_{k}(G/E)$ is finite. It follows that
$\gamma_{k}(G)$ is finitely generated and so even finite. This
final contradiction proves the theorem.
\end{proof}



\begin{thebibliography}{99}
\bibitem{BP81} B. Bruno and R.E. Phillips, \textit{Groups with restricted
non-normal subgroups} Math Z. \textbf{176 }(1981), 199--221.

\bibitem{DMdG05} F.\ De Mari and F. de Giovanni, \textit{Groups with few
normalizer subgroups} Irish Math. Soc. Bulletin\textit{\
}\textbf{56} (2005), 103--113

\bibitem{DMdG06} F.\ De Mari and F. de Giovanni, \textit{Groups with
finitely many derived subgroups of non-normal subgroups} Arch.
Math. (Basel) \textbf{86 }(2006), 310--316.

\bibitem {DMdG07}F.\ De Mari and F. de Giovanni, \textit{Groups with
finitely many normalizers of non-nilpotent subgroups}, Math. Proc.
R. Ir. Acad. \textbf{107A} (2) (2007), 143--152.


\bibitem{DES01} M.R. Dixon, M.J. Evans and H. Smith, \textit{Groups with
some minimal conditions on non-nilpotent subgroup }J. Group Theory \textit{%
\textbf{4}} (2001), 207--215.

\bibitem{dGR05} F. de Giovanni and D.J.S. Robinson, \textit{Groups with
finitely many derived subgroups} J. London Math. Soc.
\textit{\textbf{71}} (2005), 658--668.

\bibitem{LR80} J.C. Lennox and D.J.S. Robinson, \textit{Soluble products of
nilpotent groups }Rend. Sem. Mat. Univ. Padova
\textit{\textbf{62}} (1980), 261--280.

\bibitem{LR04} J.C. Lennox and D.J.S. Robinson, \textit{The theory of
infinite soluble groups,} Oxford University Press (2004).

\bibitem {PW78}R.E. Phillips and J.S. Wilson, \textit{On certain
minimal conditions for infinite groups} J. Algebra \textbf{51}
(1978), 41--68.

\bibitem{R13} S. Rinauro, \textit{Groups with finiteness conditions on the
lower central series of subgroups} Algebra Colloq\textit{.}
\textbf{20} (2013), 663--670.

\bibitem{R72} D.J.S. Robinson, \textit{Finiteness conditions and generalized
soluble groups}, Springer, Berlin (1972).

\bibitem{Z74} D.I.\ Zai\v{c}ev, \textit{On solvable subgroups of locally
solvable groups} Soviet Math. Dokl. (SSSR) \textbf{15} (1974),
342--345.
\end{thebibliography}
\end{document}